\newtheorem{theorem}{Theorem}
\newtheorem{lemma}[theorem]{Lemma}
\newtheorem{proposition}[theorem]{Proposition}
\newtheorem{definition}[theorem]{Definition}
\begin{document}

\title[spectral
radii of three kinds of tensors]{The extremal spectral
radii of $k$-uniform supertrees}

\author{Honghai Li}
\address{College of Mathematics and Information Science\\
Jiangxi Normal University\\
Nanchang, 330022,  China}
\email{lhh@jxnu.edu.cn}

\author{Jiayu Shao}
\address{Department of Mathematics\\
Tongji University\\
Shanghai, China} \email{jyshao@tongji.edu.cn}

\author{Liqun Qi}
\address{Department of Applied Mathematics\\
The Hong Kong Polytechnic University\\
Hung Hom, Kowloon, Hong Kong}
\email{liqun.qi@polyu.edu.hk}

\keywords{Hypergraph, spectral radius, adjacency tensor, signless Laplacian tensor, incidence $Q$-tensor, supertree}

\thanks{The first author's work was supported by National Natural
Science Foundation of China (No. 11201198),
Natural Science Foundation of
        Jiangxi Province (No. 20132BAB201013), the Sponsored Program for Cultivating Youths of
         Outstanding Ability in Jiangxi Normal
        University and his work was partially done when he
was visiting The Hong Kong Polytechnic University.
 The second author's work was supported by National Natural
Science Foundation of China (No. 11231004 and 11271288). The third author's work was supported by the Hong Kong Research Grant Council (Grant No. PolyU 502510, 502111, 501212
and 501913). }

\begin{abstract}
In this paper, we study some extremal problems of three kinds of spectral radii of $k$-uniform hypergraphs (the adjacency spectral radius, the signless Laplacian spectral radius and the incidence $Q$-spectral radius).
 We call a connected and acyclic
$k$-uniform hypergraph a supertree. We introduce the operation of ``moving edges" for hypergraphs, together with the two special cases of this operation: the edge-releasing operation and the total grafting operation. By studying the perturbation of these kinds of spectral radii of hypergraphs under these operations, we prove that for all these three kinds of spectral radii, the hyperstar $\mathcal{S}_{n,k}$
attains uniquely the maximum spectral radius among all $k$-uniform supertrees on $n$ vertices. We also determine the unique $k$-uniform supertree on $n$ vertices with the second largest spectral radius (for these three kinds of spectral radii). We also prove that for all these three kinds of spectral radii, the loose path $\mathcal{P}_{n,k}$ attains uniquely the minimum spectral radius among all $k$-th power hypertrees of $n$ vertices.  Some bounds on the incidence $Q$-spectral radius are given. The relation between the incidence $Q$-spectral radius and the spectral radius of the matrix product of the incidence matrix and its transpose is discussed.

\end{abstract}

\maketitle

\section{Introduction}

The spectral method has been proved successful in studying the structural properties of graphs and in solving those problems arising from combinatorics or so which do not seemingly refer to any eigenvalues. The introduction of tensor eigenvalues by Qi \cite{Qi05} and Lim \cite{Lim05} independently makes the generalization of the spectral technique from graphs to hypergraphs become possible.

In 2008, Lim \cite{Lim08} initially proposed to study spectral hypergraph theory via eigenvalues of tensors.  In 2012, Cooper and Dutle \cite{CoopDut12} defined the eigenvalues (and the spectrum) of a uniform hypergraph as the eigenvalues (and the spectrum) of the adjacency tensor of that hypergraph, and proved a number of interesting results on the spectra of hypergraphs, as well as some natural analogs of basic results in spectral graph theory. The (adjacency) spectrum of a uniform hypergraph were further studied in \cite{PearsonZhang, XieChang13adjaZ}.
In the same year, Hu and Qi \cite{HuQi12} proposed a definition for the Laplacian tensor of an even uniform
hypergraph, and analyzed its connection with edge and vertex connectivity.
Li, Qi, and Yu \cite{LiQiYu13} proposed another definition
for the Laplacian tensor of an even uniform hypergraph, established a variational
formula for its second smallest $Z$-eigenvalue, and used it to provide lower bounds
for the bipartition width of the hypergraph. Further, Qi \cite{Qi-Lap-signlessLap} proposed a simple and natural definition for the Laplacian
tensor $\mathcal{L}$ and the signless Laplacian tensor $\mathcal{Q}$ as $\mathcal{L}=\mathcal{D}-\mathcal{A}$ and $\mathcal{Q}=\mathcal{D}+\mathcal{A}$ respectively, where $\mathcal{A}$ is the adjacency tensor of the hypergraph (defined as in \cite{CoopDut12}), and $\mathcal{D}$ is the degree diagonal tensor of the hypergraph. The properties of these Laplacian  and signless Laplacian tensors were further studied in \cite{HuQi14,HuQishao13,HuQiXie,Qishaowang14,shaoshanwu-manu}.
 Following this, Hu and Qi \cite{HuQi13normLap} proposed the normalized Laplacian tensor  and made some explorations on it.

In \cite{XieChang13signLapZ, XieChang13signLapHeig}, Xie and Chang proposed a different
definition for the signless Laplacian tensor of an even uniform hypergraph, and studied its
largest and smallest $H$-eigenvalues and $Z$-eigenvalues.

In this paper, a connected and acyclic $k$-uniform hypergraph is called a supertree.

The concept of  power hypergraphs was introduced in \cite{HuQishao13}. Let $G=(V,E)$ be an ordinary graph. For every $k\geq3$, the $k$th power of $G$, $G^k:=(V^k,E^k)$ is defined as the $k$-uniform hypergraph with the edge set $E^k:=\{e\cup\{i_{e,1},\ldots, i_{e,k-2},\}|e\in E\}$ and the vertex set  $V^k:=V\cup(\cup_{e\in E}\{i_{e,1},\ldots, i_{e,k-2}\})$.
The $k$th power of an ordinary tree was called a hypertree there. By definition, the $k$th power of an ordinary tree is a supertree defined here.

In this paper, we study three kinds of spectral radii of $k$-uniform hypergraphs: the adjacency spectral radius, the signless Laplacian spectral radius and the incidence $Q$-spectral radius. We study some extremal problems of these three kinds of spectral radii for the class of $k$-uniform supertrees on $n$ vertices, and the class of $k$th power hypertrees on $n$ vertices.

The incidence matrix of a $k$-uniform hypergraph $G$ was introduced by Berge \cite{Berge-hypergraph}. The
tensor product, in the sense of \cite{Shao-tensorproduct,
Buchangjiang14}, of the incidence matrix $R$, the identity tensor $\mathbb{I}$ and the transpose of the incidence matrix is called the incidence $Q$-tensor, and is denoted by $\mathcal{Q}^{*}(G)$ (or simply $\mathcal{Q}^{*}$) in this paper. The spectral radius of the
incidence $Q$-tensor is called the incidence $Q$-spectral radius, of that
$k$-uniform hypergraph.   The incidence $Q$-tensor $\mathcal{Q}^{*}$ coincides with
the ``signless Laplacian tensor" proposed by Xie and Chang in \cite{XieChang13signLapZ, XieChang13signLapHeig}, for even uniform hypergraphs (which is different from the signless Laplacian tensor $\mathcal{Q}=\mathcal{D}+\mathcal{A}$ studied in this paper).

For the purpose of studying the extremal problems of that three kinds of spectral radii, we introduce the operation of ``moving edges" for hypergraphs, together with the two special cases of this operation: the edge-releasing operation and the total grafting operation. We study the perturbation of these three kinds of spectral radii of hypergraphs under these operations, and show that all these three kinds of spectral radii of supertrees strictly increase under the edge-releasing operation and the inverse of the total grafting operation.

Using these perturbation results, we prove that for all these three kinds of spectral radii, the hyperstar $\mathcal{S}_{n,k}$ attains uniquely the maximum spectral radius among all $k$-uniform supertrees on $n$ vertices. We also determine the unique $k$-uniform supertree on $n$ vertices with the second largest spectral radius (for these three kinds of spectral radii). Meanwhile, the corresponding minimization problems for these three kinds of spectral radii of supertrees are investigated, and we show that  for all these three kinds of spectral radii, the loose path $\mathcal{P}_{n,k}$ attains uniquely the minimum spectral radius among all $k$-th power hypertrees of $n$ vertices.

This paper is organized as follows. In Section~\ref{sec-prel}, notation and some definitions about
tensors and hypergraphs are given. In Section~\ref{sec-tree},
supertrees are defined and some properties of supertrees are discussed.
In Section~\ref{sec-main}, the
incidence $Q$-tensor $\mathcal{Q}^*$ of a uniform  hypergraph $G$ is defined as the tensor
product $\mathcal{Q}^*=R\mathbb{I}R^T$, where $R$ is the incidence matrix of $G$, and $\mathbb{I}$ is the identity tensor. We show in Section~\ref{sec-main} that the incidence $Q$-tensor is irreducible if and only if the associated hypergraph is connected. In Section~\ref{sec-lar}, we introduce the above-mentioned three operations on hypergraphs, and investigate the
perturbation of the three kinds of spectral radii of the supertrees under these operations. As applications, some extremal spectral problems
are solved. Two of the main results in Section~\ref{sec-lar} are the following theorems:

\begin{theorem}\label{thm-hyperstar0}
Let $\mathfrak{T}$ be a  $k$-uniform   supertree on $n$ vertices with $m$ edges.   Then
\begin{equation*}
\rho(\mathcal{A}(\mathfrak{T}))\leq m^{1/k},
\end{equation*}
and
\begin{equation*}
\rho(\mathcal{Q}(\mathfrak{T}))\leq 1+\alpha^*,
\end{equation*}
where $\alpha^*\in (m-1, m]$ is the largest real root of $x^k-(m-1)x^{k-1}-m=0$, and
\begin{equation*}
 \rho(\mathcal{Q}^*(\mathfrak{T}))\leq (m^{1/(k-1)}+k-1)^{k-1}
\end{equation*}
where either one of the equalities holds if and only if $\mathfrak{T}$ is the hyperstar $\mathcal{S}_{n,k}$.
\end{theorem}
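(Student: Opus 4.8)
The plan is to separate the inequality from the equality case. Since every $k$-uniform supertree on $n$ vertices has exactly $m=(n-1)/(k-1)$ edges, the three right-hand sides are functions of $m$ and $k$ alone, and I claim each equals the corresponding spectral radius of the hyperstar $\mathcal{S}_{n,k}$, which is the \emph{unique} maximizer in each case. Granting the latter—which is precisely what the edge-releasing and total-grafting perturbation lemmas of Section~\ref{sec-lar} deliver, namely that each of $\rho(\mathcal{A}),\rho(\mathcal{Q}),\rho(\mathcal{Q}^*)$ strictly increases under edge-releasing and that any supertree $\ne\mathcal{S}_{n,k}$ admits a strictly increasing move, so that iterating terminates uniquely at $\mathcal{S}_{n,k}$—the theorem reduces to evaluating the three spectral radii of $\mathcal{S}_{n,k}$.

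For this evaluation I would invoke Perron--Frobenius for nonnegative tensors. On a connected hypergraph each of $\mathcal{A},\mathcal{Q},\mathcal{Q}^*$ is nonnegative and weakly irreducible (for $\mathcal{Q}^*$ this is exactly the irreducibility-iff-connected statement of Section~\ref{sec-main}), so the spectral radius is attained by a positive eigenvector $x$, unique up to scaling. Because the automorphism group of $\mathcal{S}_{n,k}$ acts transitively on the $m(k-1)$ non-central vertices and fixes the center, uniqueness of $x$ forces it to be constant, $x_0=a$ at the center and $x_v=b$ at every other vertex with $a,b>0$. This collapses each $n$-dimensional eigen-system to two scalar equations, one from the center and one from a leaf.

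For the adjacency tensor, $(\mathcal{A}x^{k-1})_i=\sum_{e\ni i}\prod_{v\in e\setminus\{i\}}x_v$; the leaf equation gives $a\,b^{k-2}=\lambda b^{k-1}$, i.e. $a=\lambda b$, and the center equation $m\,b^{k-1}=\lambda a^{k-1}$ then yields $\lambda^{k}=m$, so $\rho(\mathcal{A}(\mathcal{S}_{n,k}))=m^{1/k}$. For $\mathcal{Q}=\mathcal{D}+\mathcal{A}$ the leaf equation becomes $a=(\mu-1)b$ and the center equation $(\mu-m)a^{k-1}=m\,b^{k-1}$; writing $\alpha=\mu-1$ and eliminating $a/b$ gives $\alpha^{k}-(m-1)\alpha^{k-1}-m=0$, and since $f(x)=x^{k}-(m-1)x^{k-1}-m$ satisfies $f(m-1)=-m<0$ and $f(m)=m^{k-1}-m\ge 0$, its largest real root $\alpha^{\ast}$ lies in $(m-1,m]$ and $\rho(\mathcal{Q}(\mathcal{S}_{n,k}))=1+\alpha^{\ast}$. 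For the incidence $Q$-tensor I would first record the edgewise formula $(\mathcal{Q}^*x^{k-1})_i=\sum_{e\ni i}\bigl(\sum_{v\in e}x_v\bigr)^{k-1}$ arising from $\mathcal{Q}^*=R\mathbb{I}R^{T}$; with the two-value ansatz both equations feature the common edge-sum $a+(k-1)b$, and dividing the center equation by the leaf equation gives $(a/b)^{k-1}=m$, whence $\rho(\mathcal{Q}^*(\mathcal{S}_{n,k}))=(a/b+k-1)^{k-1}=(m^{1/(k-1)}+k-1)^{k-1}$.

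The main obstacle is the uniqueness-of-maximizer input: the strict monotonicity of all three spectral radii under edge-releasing, and the reduction of an arbitrary supertree to $\mathcal{S}_{n,k}$ by such moves. This is the heart of Section~\ref{sec-lar} and is exactly what upgrades the inequalities to ``equality if and only if $\mathfrak{T}=\mathcal{S}_{n,k}$.'' A secondary, routine point is to confirm that the root selected by each elimination is genuinely the Perron value: one verifies that the computed $(a,b)$ can be taken strictly positive, and that for $\mathcal{Q}$ the relevant value is the \emph{largest} real root $\alpha^{\ast}$, which follows because the Perron eigenvalue is the largest eigenvalue and a positive eigenvector for the two-value system occurs precisely at $\alpha^{\ast}$.
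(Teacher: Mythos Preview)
Your proposal is correct and follows essentially the same route as the paper: first invoke the edge-releasing perturbation results of Section~\ref{sec-lar} (Theorem~\ref{thm-edgerelease} and the induction in Theorem~\ref{thm-hyperstar1}) to show $\mathcal{S}_{n,k}$ is the unique maximizer for all three radii, then evaluate each spectral radius of $\mathcal{S}_{n,k}$ via the automorphism-forced two-value ansatz and Perron--Frobenius, exactly as in Theorem~\ref{thm-hyperstarradius} and the paragraph following it. Two minor remarks: total grafting is not needed here (it is used only for the \emph{minimum}, Theorem~\ref{thm-kpowertree}); and for $\rho(\mathcal{Q}(\mathcal{S}_{n,k}))$ the paper simply cites \cite{HuQiXie}, whereas you supply the derivation of $\alpha^{k}-(m-1)\alpha^{k-1}-m=0$ and the bracketing $\alpha^{\ast}\in(m-1,m]$ yourself, which is a welcome self-contained addition.
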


\vskip 0.18cm

We also determine the unique $k$-uniform supertree on $n$ vertices with the second largest spectral radius (for these three kinds of spectral radii).

\vskip 0.18cm

\begin{theorem}\label{thm-kpowertree0}
Let  $T^k$ be the $k$th power of an ordinary tree $T$, defined as in \cite{HuQishao13}. Suppose that $T^k$ has $n$ vertices. Then we have
\begin{equation*}
\rho(\mathcal{A}(\mathcal{P}_{n,k}))\leq \rho(\mathcal{A}(T^k))\leq \rho(\mathcal{A}(\mathcal{S}_{n,k}))
\end{equation*}
and
\begin{equation*}
\rho(\mathcal{Q}(\mathcal{P}_{n,k}))\leq \rho(\mathcal{Q}(T^k))\leq \rho(\mathcal{Q}(\mathcal{S}_{n,k}))
\end{equation*}
and
\begin{equation*}
\rho(\mathcal{Q}^{*}(\mathcal{P}_{n,k}))\leq \rho(\mathcal{Q}^{*}(T^k))\leq \rho(\mathcal{Q}^{*}(\mathcal{S}_{n,k}))
\end{equation*}
where either one of the left equalities holds if and only if  $T^k\cong \mathcal{P}_{n,k}$, and either one of the right equalities holds if and only if  $T^k\cong \mathcal{S}_{n,k}$.
\end{theorem}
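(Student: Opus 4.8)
The plan is to handle the two inequalities separately, since they rest on different parts of the earlier development. The right-hand inequalities, together with their equality cases, come essentially for free: a $k$-th power hypertree $T^k$ is in particular a $k$-uniform supertree on $n$ vertices, so Theorem~\ref{thm-hyperstar0} applies directly and gives $\rho(\mathcal{A}(T^k))\leq\rho(\mathcal{A}(\mathcal{S}_{n,k}))$, and likewise for $\mathcal{Q}$ and $\mathcal{Q}^*$, with equality in any one of them exactly when $T^k\cong\mathcal{S}_{n,k}$. One only notes in addition that $\mathcal{S}_{n,k}$ is itself the $k$-th power of a star, so it lies in the class under consideration.

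The real content is the left-hand inequalities, asserting that the loose path minimizes each of the three spectral radii over the class of $k$-th power hypertrees. Here I would run a deformation argument powered by the total grafting operation, which, being the inverse of an operation that strictly increases all three spectral radii, strictly decreases all three. Starting from $T^k$ with $T$ not a path, the tree $T$ has a vertex $u$ of degree at least $3$; choosing $u$ to lie on a longest path of $T$, at least one pendant branch $B$ at $u$ avoids that longest path. I would graft the entire edge-block of $T^k$ corresponding to $B$ onto the far endpoint of the longest path. In the underlying tree this is precisely the branch move that detaches $B$ from $u$ and reattaches it at the path's end, lengthening the path and reducing the branching at $u$.

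The step demanding the most care, and the main obstacle, is to verify that this move on $T^k$ is genuinely an instance of the total grafting operation of Section~\ref{sec-lar}, and that its output stays inside the class of $k$-th power hypertrees rather than slipping into the larger class of general supertrees. This reduces to checking that one always transports complete edge-blocks (a full hyperedge together with its $k-2$ pendant degree-one vertices) and never splits or fuses the power structure, so that the result is literally $(T')^k$ for the tree $T'$ produced by the corresponding branch move on $T$. Granting this, the perturbation theorem gives a strict decrease of each of the three spectral radii at every step; since each move strictly lowers a suitable complexity measure of $T$ (for instance the total excess degree $\sum_v\max(d_T(v)-2,0)$), the process terminates after finitely many steps at the unique power hypertree with no branching, namely $\mathcal{P}_{n,k}$. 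Chaining the strict inequalities yields $\rho(\mathcal{A}(\mathcal{P}_{n,k}))<\rho(\mathcal{A}(T^k))$ whenever $T$ is not a path, together with the analogous strict inequalities for $\mathcal{Q}$ and $\mathcal{Q}^*$, so that the left equalities hold exactly when $T^k\cong\mathcal{P}_{n,k}$.
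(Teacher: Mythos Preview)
Your overall architecture matches the paper's: the right-hand inequalities follow immediately because $T^k$ is a supertree and Theorem~\ref{thm-hyperstar0} applies; the left-hand inequalities are obtained by deforming $T^k$ to $\mathcal{P}_{n,k}$ through a finite chain of total grafting operations, each strictly decreasing all three spectral radii by Theorem~\ref{thm-retotateedgeop}.

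The gap is in the specific deformation step you propose. The move you describe---detach an arbitrary branch $B$ at a vertex $u$ on a longest path and reattach it at the path's endpoint $w$---is \emph{not} in general an instance of total grafting. By Definition~\ref{def-grafting}, total grafting at a vertex $v$ concatenates two \emph{pendent paths} at $v$ into one. For your move to qualify as total grafting at $u$, both $B$ and the segment of the longest path from $u$ to $w$ would have to be pendent paths in $T$. Neither is guaranteed by your choice: $B$ can be any subtree, and the $u$--$w$ segment can contain further vertices of degree $\ge 3$. So the verification you correctly flag as ``the main obstacle'' actually fails, and Theorem~\ref{thm-retotateedgeop} does not apply to your step.

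The paper repairs this with a sharper choice of $u$ (Lemma~\ref{lem-grafting}): fix any vertex $v$ of $T$ and take $u$ to be a vertex of degree $\ge 3$ \emph{furthest} from $v$. Then every branch at $u$ except the one pointing toward $v$ is forced to be a pendent path (otherwise it would contain a degree-$\ge 3$ vertex further from $v$). Hence $u$ carries at least $d(u)-1\ge 2$ pendent paths, and a genuine total grafting at $u$ merges two of them, lowering $d(u)$ by one. After $d(u)-2$ such steps $u$ has degree $2$, so $N_3(T)$ drops by one; induction on $N_3(T)$ reaches the path. With this modification your argument is complete, and the concern about staying inside the class of $k$-th power hypertrees disappears as well: the grafting is performed on the ordinary tree $T$, and passing to $k$-th powers commutes with it.
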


In the last section, some bounds on the incidence $Q$-spectral radius are presented, and the relation between the incidence $Q$-spectral radius and the spectral radius of the matrix product of the incidence matrix and its transpose is discussed.

\section{Preliminaries}\label{sec-prel}

A $k$th-order $n$-dimensional real tensor $\mathcal{T}$ consists of
$n^k$ entries in real numbers:
\[
\mathcal{T}=(\mathcal{T}_{i_1i_2\cdots
i_k}),\,\,\mathcal{T}_{i_1i_2\cdots i_k}\in \mathbb{R},\,\, 1\leq
i_1,i_2,\ldots,i_k\leq n.
\]
$\mathcal{T}$  is called \textit{symmetric} if the value of
$\mathcal{T}_{i_1i_2\cdots i_k}$ is invariant under any
permutation of its indices $i_1,i_2,\ldots,i_k$.  A real symmetric tensor $\mathcal{T}$ of order $k$ dimension $n$ uniquely defines a
$k$th degree homogeneous polynomial function $f$ with  real
coefficient by $f(x)=\mathcal{T}x^k$, which is a real scalar defined as
$$
\mathcal{T}x^k=\sum_{i_1,\ldots, i_k=1}^n\mathcal{T}_{i_1 \cdots i_k}x_{i_1}\cdots
x_{i_k}.
$$
$\mathcal{T}$ is  called positive semi-definite if $f(x)=\mathcal{T}x^k\geq0$ for all $x\in \mathbb{R}^n$. Clearly, for the nontrivial case, $k$ must be even.

Recall the definition of tensor product, $\mathcal{T}x^{k-1}$ is a vector in $\mathbb{R}^n$ with its $i$th
component as
\begin{equation}\label{e-produdefn}
(\mathcal{T}x^{k-1})_i=\sum_{i_2,\ldots,i_k=1}^n\mathcal{T}_{i i_2 \cdots i_k}x_{i_2}\cdots
x_{i_k}.
\end{equation}

\begin{definition}[\cite{Qi05}]
Let $\mathcal{T}$ be a $k$th-order $n$-dimensional tensor and $\mathbb{C}$ be the set of all complex numbers.   Then $\lambda$ is an eigenvalue of
$\mathcal{T}$ and $0\neq x\in \mathbb{C}^n$ is an
eigenvector corresponding to $\lambda$ if
$(\lambda,x)$ satisfies
\begin{equation*}
\mathcal{T}x^{k-1}=\lambda x^{[k-1]},
\end{equation*}
where $x^{[k-1]}\in \mathbb{C}^n$ with
$(x^{[k-1]})_i=(x_i)^{k-1}$.
\end{definition}

Several kinds of eigenvalues of tensors were defined in \cite{Qi05} and we focus on the one above in this paper.

A hypergraph
$G$ is a pair $(V,E)$, where $E\subseteq \mathcal{P}(V)$ and $\mathcal{P}(V)$ stands for the power set of $V$. The elements of $V=V(G)$, labeled as $[n]=\{1,\ldots,n\}$, are referred to as vertices and the elements of $E=E(G)$ are called edges. A
hypergraph $G$ is said to be $k$-uniform for an integer $k\geq2$ if, for all $e\in E(G)$,  $|e|=k$.  For a subset $S\subset V$, we denote by $E_S$ the set of edges $\{e\in E\ |\ S\cap e\neq\emptyset\}$. For  a vertex $i\in V$, we simplify $E_{\{i\}}$ as $E_i$. It is the set of edges containing the vertex $i$, i.e., $E_i=\{e\in E\ |\ i\in e\}$. The cardinality $|E_i|$ of the set $E_i$ is defined as the degree of the vertex $i$, which is denoted by $d_i$. A hypergraph is \textit{regular of degree $r$} if $d_1=\cdots=d_n=r$.

If  $|e_i\cap e_j|=0$ or $s$ for all edges  $e_i\neq e_j$, then $G$ is called an \textit{$s$-hypergraph}.
An ordinary graph  is a $2$-uniform $1$-hypergraph. Note that $1$-hypergraph here is also called \textit{linear hypergraph} in \cite{Bretto-hypergraph}, and in this paper we shall say linear hypergraph when we mean $1$-hypergraph.  We
assume that $G$ is simple throughout the paper, i.e. $e_i\neq e_j$ if $i\neq j$. In a hypergraph, two vertices are said to be \textit{adjacent} if there is an edge that contains both of these vertices. Two edges are said to be \textit{adjacent} if their intersection is not empty. A vertex $v$ is said to be \textit{incident} to an edge $e$ if $v\in e$.  In a hypergraph $G$, a \textit{path of length $q$} is defined to be  a sequence of vertices and edges $v_1,e_1,v_2,e_2,\ldots,v_q,e_q,v_{q+1}$ such that
\begin{enumerate}
   \item $v_1,\ldots,v_{q+1}$ are all distinct vertices of $G$,
   \item $e_1\ldots,e_q$ are all distinct edges of $G$,
   \item $v_r,v_{r+1}\in e_r$ for $r=1,\ldots,q$.
 \end{enumerate}
If $q>1$ and $v_1=v_{q+1}$, then this path is called a \textit{cycle of length $q$}. A hypergraph $G$ is \textit{connected} if there exists a path starting at $v$ and terminating at $u$ for all $v,u\in V$, and is called \textit{acyclic} if it contains no cycle. These definitions can be found in \cite{Berge-hypergraph} and \cite{Bretto-hypergraph}.

\section{Supertrees}\label{sec-tree}

In graph theory, a tree is defined to be a connected graph without cycles. Analogously, we introduce the concept of supertree as follows.
\begin{definition}
A \textit{supertree} is a hypergraph which is both connected and acyclic.
 \end{definition}

A characterization of acyclic hypergraph has been given in Berge's textbook \cite{Berge-hypergraph} and particularly for the connected case is the following result.

\begin{proposition}\cite[Proposition 4, p.392]{Berge-hypergraph}\label{prop-hypergraphacy1}
If $G$ is a connected hypergraph with $n$ vertices and $m$ edges, then it is acyclic if and only if $\sum_{i\in[m]}(|e_i|-1)=n-1$.
\end{proposition}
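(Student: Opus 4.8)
The plan is to pass to the \emph{incidence bipartite graph} $B = B(G)$, the ordinary graph whose vertex set is the disjoint union $V(G) \sqcup E(G)$ and in which a vertex $v \in V(G)$ is joined to an edge $e \in E(G)$ precisely when $v \in e$. This graph has $n + m$ vertices and exactly $\sum_{i \in [m]} |e_i|$ edges, since each incidence $v \in e_i$ contributes one edge of $B$. The strategy is to translate both hypotheses (connectedness and acyclicity) and the target identity into statements about $B$, and then invoke the elementary fact that a connected ordinary graph on $N$ vertices is a tree if and only if it has exactly $N-1$ edges.

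First I would check that $G$ is connected if and only if $B$ is connected. This follows directly from the definition of a path in a hypergraph: a hypergraph path $v_1, e_1, \ldots, v_q, e_q, v_{q+1}$, upon listing its vertices and edges alternately, is exactly a path in $B$ between the $V(G)$-vertices $v_1$ and $v_{q+1}$, and conversely any $B$-path between two elements of $V(G)$ alternates between the two sides and so descends to a hypergraph path. Hence reachability in $G$ and in $B$ agree on $V(G)$; and since every edge $e$ is nonempty, its image as an edge-vertex of $B$ is adjacent to some $v \in e$ and is therefore reachable too, so connectedness of $G$ forces connectedness of all of $B$.

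Next comes the crux: $G$ is acyclic if and only if $B$ is a forest, because cycles of $G$ correspond bijectively to cycles of $B$. Indeed, a cycle $v_1, e_1, v_2, \ldots, v_q, e_q, v_1$ of $G$ (with $q > 1$, the $v_r$ distinct and the $e_r$ distinct) reads off as a closed walk through the distinct vertices $v_1, e_1, v_2, \ldots, v_q, e_q$ of $B$, i.e.\ a cycle of length $2q \geq 4$; conversely, since $B$ is bipartite and simple it has no cycle of length less than $4$, so every cycle of $B$ has even length $2q$ with $q \geq 2$, alternates as $v_1, e_1, \ldots, v_q, e_q$ with distinct vertices and edges and with $v_r, v_{r+1} \in e_r$ (indices mod $q$), which is precisely a hypergraph cycle. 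The step requiring care is exactly this translation: one must verify that the distinctness conditions in the two cycle definitions match up, and that $B$ contributes no spurious short cycles — both guaranteed by bipartiteness together with the standing simplicity of $G$ (distinct edges give distinct edge-vertices). I expect this correspondence to be the main obstacle, since everything else is bookkeeping.

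Finally I would assemble the pieces. Assuming $G$ connected, $B$ is a connected graph on $N := n + m$ vertices with $\sum_{i \in [m]} |e_i|$ edges. Being connected, $B$ is a tree if and only if it is a forest, and if and only if its number of edges equals $N - 1$. Combining this with the cycle correspondence, $G$ is acyclic if and only if $B$ is a tree, if and only if $\sum_{i \in [m]} |e_i| = (n + m) - 1$, which rearranges to $\sum_{i \in [m]} (|e_i| - 1) = n - 1$. This is the desired equivalence.
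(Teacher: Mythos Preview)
The paper does not supply its own proof of this proposition; it is simply quoted from Berge's textbook with a citation and no argument. So there is nothing in the paper to compare your proposal against.

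That said, your argument via the incidence bipartite graph $B = B(G)$ is correct and is in fact the standard way to establish this result (and is essentially how Berge proceeds). The edge count $\sum_i |e_i|$ and vertex count $n+m$ for $B$ are right, the connectivity transfer is straightforward, and your cycle correspondence is the only place requiring care. There you have handled it properly: bipartiteness of $B$ forces any cycle to have even length $2q$, and since $B$ is automatically a simple graph (each incidence $v \in e$ contributes exactly one edge), there are no $2$-cycles, so $q \ge 2$; reading off the alternating vertices gives distinct $v_1,\dots,v_q$ and distinct $e_1,\dots,e_q$ with the required incidences, matching the paper's cycle definition. The final tree-counting step is immediate. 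One very minor remark: your parenthetical about the ``standing simplicity of $G$'' is not quite the reason $B$ has no multi-edges --- $B$ is simple by construction regardless --- but simplicity of $G$ is what guarantees the map $e_i \mapsto$ edge-vertex is injective, which you do need.
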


In particular, if $G$ is a connected $k$-uniform hypergraph with $n$ vertices and $m$ edges, then it is acyclic if and only if $m=\frac{n-1}{k-1}$.

 \begin{proposition}\label{prop-edgenumber}
A  supertree $G$ is a  linear hypergraph. If in addition, $G$ is a $k$-uniform supertree on $n$ vertices, then it  has $\frac{n-1}{k-1}$ edges.
 \end{proposition}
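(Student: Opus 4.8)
The plan is to treat the two assertions separately: derive linearity directly from the acyclicity hypothesis, and then obtain the edge count as an immediate consequence of Proposition~\ref{prop-hypergraphacy1}.

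For the first assertion, I would argue by contradiction. Suppose $G$ is a supertree that fails to be linear, so there exist two distinct edges $e_i \neq e_j$ with $|e_i \cap e_j| \geq 2$. I would then pick two distinct vertices $u, v \in e_i \cap e_j$ and exhibit the sequence $u, e_i, v, e_j, u$ as a cycle of length $2$: the edges $e_i, e_j$ are distinct, the vertices $u, v$ are distinct, the closing identification $v_1 = v_3 = u$ holds, and each consecutive pair of vertices lies in the interposed edge. Since $q = 2 > 1$, this meets the definition of a cycle and contradicts the acyclicity of $G$. Hence no two distinct edges can meet in more than one vertex, i.e.\ $G$ is a linear ($1$-)hypergraph.

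For the second assertion, I would assume in addition that $G$ is $k$-uniform on $n$ vertices with $m$ edges. Because a supertree is by definition connected and acyclic, Proposition~\ref{prop-hypergraphacy1} applies and gives $\sum_{i \in [m]} (|e_i| - 1) = n - 1$. Substituting $|e_i| = k$ for every edge yields $m(k-1) = n-1$, and therefore $m = \frac{n-1}{k-1}$.

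The only subtlety I anticipate lies in the first step: one must verify that the two-edge, two-vertex configuration genuinely satisfies the formal definition of a cycle, in particular the length requirement $q > 1$ together with the identification $v_1 = v_{q+1}$, and is not excluded by the distinctness convention stated for paths (which for cycles should be read as distinctness of $v_1, \ldots, v_q$ only). Once the length-$2$ case is confirmed to be a legitimate cycle, the contradiction is immediate and the edge-count identity is entirely routine.
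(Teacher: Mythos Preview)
Your proposal is correct and follows essentially the same approach as the paper: both argue linearity by contradiction via the length-$2$ cycle $v_1,e_i,v_2,e_j,v_1$ arising from two common vertices of distinct edges, and both derive the edge count directly from Proposition~\ref{prop-hypergraphacy1} by specializing $|e_i|=k$. Your write-up is slightly more explicit about checking the cycle definition and carrying out the arithmetic, but the underlying argument is identical.
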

  \begin{proof}
Suppose on the contrary that $G$ is not a linear hypergraph, then there exist two distinct edges $e_i$ and $e_j$ having at least two common vertices, say $\{v_1,v_2\}\subseteq e_i\cap e_j$. Then $v_1,e_i,v_2,e_j,v_1$ would be a cycle of length 2, contradicting that $G$ is acyclic. So $G$ is a $1$-hypergraph or equivalently linear hypergraph.

By Proposition~\ref{prop-hypergraphacy1}, we know that a $k$-uniform supertree on $n$ vertices has $\frac{n-1}{k-1}$ edges.
  \end{proof}

\begin{proposition}\label{prop-cytj} Let $n, \ k$ be positive integers with $n\ge k$. Then there exists a $k$-uniform supertree with $n$ vertices if and only if $n-1$ is a multiple of $k-1$.
\end{proposition}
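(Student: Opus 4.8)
The plan is to dispatch the two implications separately, with the forward (necessity) direction following immediately from the edge count already established, and the reverse (sufficiency) direction handled by an explicit inductive construction.

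For the ``only if'' direction I would argue as follows. Suppose a $k$-uniform supertree $G$ on $n$ vertices exists. By Proposition~\ref{prop-edgenumber}, $G$ has exactly $\frac{n-1}{k-1}$ edges. Since the number of edges is a nonnegative integer, $k-1$ must divide $n-1$; that is, $n-1$ is a multiple of $k-1$. This direction requires nothing beyond what the excerpt already provides.

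For the ``if'' direction, assume $n-1=m(k-1)$ for some positive integer $m$ (note that $n\ge k$ forces $m\ge 1$), and construct a $k$-uniform supertree on $n$ vertices by induction on $m$. The base case $m=1$ (so $n=k$) is a single edge on $k$ vertices, which is trivially connected and acyclic. For the inductive step, given a $k$-uniform supertree $G$ on $m(k-1)+1$ vertices, I would form $G'$ by choosing any vertex $v\in V(G)$, adjoining $k-1$ fresh vertices, and declaring the union of $\{v\}$ with these new vertices to be a new edge. Then $G'$ has $(m+1)(k-1)+1$ vertices and $m+1$ edges. To confirm $G'$ is a supertree, I would first note that it is connected, since the new edge meets the connected subhypergraph $G$ at $v$, so every new vertex is joined to $G$. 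As $G'$ is connected and its edge count satisfies $m+1=\frac{\bigl((m+1)(k-1)+1\bigr)-1}{k-1}$, the $k$-uniform specialization of Proposition~\ref{prop-hypergraphacy1} forces $G'$ to be acyclic. Hence $G'$ is a $k$-uniform supertree with the desired number of vertices, completing the induction.

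The only real substance lies in the sufficiency direction: one must produce a genuine supertree rather than merely a hypergraph with the correct vertex and edge counts. The delicate point would ordinarily be establishing acyclicity, but this is circumvented cleanly by the equivalence in Proposition~\ref{prop-hypergraphacy1}, which (for connected hypergraphs) converts acyclicity into the arithmetic identity $m=\frac{n-1}{k-1}$ that the construction arranges by design. Thus I expect the whole argument to reduce to bookkeeping of vertex and edge counts, together with the single observation that attaching an edge through one existing vertex preserves connectedness while introducing no cycle.
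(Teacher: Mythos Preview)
Your proof is correct. The necessity argument is identical to the paper's. For sufficiency, however, the paper takes a different route: it sets $n'=\frac{n-1}{k-1}+1$ and simply takes the $k$-th power of any ordinary tree $T$ on $n'$ vertices, asserting that this is easily verified to be a $k$-uniform supertree on $n$ vertices. Your approach is more elementary and self-contained: rather than invoking the power-hypergraph construction from \cite{HuQishao13}, you build the supertree inductively by attaching pendant edges, and you certify acyclicity at each step via the numerical criterion of Proposition~\ref{prop-hypergraphacy1}. The paper's version is terser but leaves the verification (connectedness and acyclicity of the $k$-th power) to the reader; your version spells out exactly why the constructed object is a supertree, at the cost of a short induction.
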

\begin{proof} The necessity follows from Proposition~\ref{prop-edgenumber}. Now if $n-1$ is a multiple of $k-1$. Let $n'=\frac {n-1}{k-1}+1$, and take $G$ to be the $k$-th power of an ordinary tree $T$ of order $n'$. Then it is easy to verify that $G$ is a $k$-uniform supertree with $n$ vertices.
\end{proof}

\begin{definition}[\cite{HuQishao13}]
Let  $G=(V, E)$ be a $k$-uniform hypergraph with $n$ vertices and $m$ edges.
\begin{itemize}
  \item If there is a disjoint partition of the vertex
set $V$ as  $V=V_0\cup V_1\cup\cdots\cup V_m$ such that $|V_0|=1$ and  $|V_1|=\cdots=|V_m|=k-1$, and $E=\{V_0\cup V_i\ |  i\in[m]\}$, then $G$ is called a hyperstar, denoted by $\mathcal{S}_{n,k}$.
  \item  If $G$ is a path $(v_0,e_1,v_1,\cdots,v_{m-1},e_m,v_m)$ such that the vertices $v_1, \cdots, v_{m-1}$ are of degree two, and all the other vertices of $G$ are of degree one, then $G$ is called a loose path, denoted by $\mathcal{P}_{n,k}$.
\end{itemize}
\end{definition}
Note that both hyperstar and  loose path are supertrees.

\section{The incidence $Q$-tensors of uniform hypergraphs}\label{sec-main}

In \cite{Shao-tensorproduct}, Shao introduced a definition for tensor product and then  Bu et.al \cite{Buchangjiang14} generalized it as follows:
\begin{definition}\label{d7}
Let $\mathcal{A}\in\mathbb{C}^{n_1\times n_2\times\cdots\times n_2}$ and $\mathcal{B}\in\mathbb{C}^{n_2\times\cdots\times n_{k+1}}$ be order $m\geq2$ and $k\geq1$ tensors, respectively. The product
$\mathcal{A}\mathcal{B}$ is the following tensor $\mathcal{C}$ of order $(m-1)(k-1)+1$ with entries
\[
c_{i\alpha_1\cdots\alpha_{m-1}}=\sum_{i_2,\ldots,i_m\in [n_2]}a_{ii_2\cdots i_m}b_{i_2\alpha_1}\cdots b_{i_m\alpha_{m-1}},
\]
where $i\in [n_1]$, $\alpha_1,\cdots,\alpha_{m-1}\in [n_3]\times\cdots\times [n_{k+1}]$.
\end{definition}
Note that by Definition \ref{d7}, now $\mathcal{T}x^{k-1}$ defined
in \eqref{e-produdefn} can be simply written as $\mathcal{T}x$.

Let $G=(V,E)$ be a $k$-uniform hypergraph with vertex set $V=\{v_1,\ldots,v_n\}$, and edge set $E=\{e_1\ldots,e_m\}$. In \cite{Bretto-hypergraph}, the \textit{incidence matrix} of $G$ is defined to be a matrix $R$ whose rows and columns are indexed by the vertices and edges of $G$, respectively. The $(i,j)$-entry of $R$ is
\[
r_{ij}=\left\{
         \begin{array}{ll}
           1, & \hbox{if $v_i\in e_j$;} \\
           0, & \hbox{otherwise.}
         \end{array}
       \right.
\]
Let $\mathbb{I}$ denote the identity tensor of appropriate dimension, e.g., $\mathbb{I}_{i_1\ldots i_k}=1$ if and only if $i_1=\cdots=i_k\in [m]$, and zero otherwise when the dimension is $m$.
 Consider the tensor $R\mathbb{I}R^{T}$. By Definition \ref{d7}, it is a tensor of order $k$ and dimension $n$, whose $(i_1,i_2,\ldots,i_k)$-entry is
\begin{align}\label{eQentry}
&(R\mathbb{I}R^{T})_{i_1,i_2,\ldots,i_k}=\sum_{j=1}^m r_{i_1j}(\mathbb{I}R^{T})_{ji_2\cdots i_k} \nonumber \\
&=\sum_{j=1}^m r_{i_1j}\left(\sum_{l_2,\cdots, l_k=1}^m\mathbb{I}_{jl_2\cdots l_k}r_{i_2l_2}\cdots r_{i_kl_k}\right) \nonumber \\
&=\sum_{j=1}^m r_{i_1j}r_{i_2j}\cdots r_{i_kj}
\end{align}
which is the number of edges $e$ of $G$ such that $i_t\in e$ for all $t=1,\cdots,k$.

Note that then $R\mathbb{I}R^{T}$ is a symmetric tensor of order $k$ and dimension $n$. Consider the homogeneous polynomial $f(x):=x^T(R\mathbb{I}R^{T}x)$, and let $y=R^Tx$, write
$$x(e)=\sum_{i\in e}x_i.$$
Then by $y=R^Tx$ we have $y_j=\sum_{i\in e_j}x_i=x(e_j)$, and so
\begin{equation}\label{e-signLappoly2}
x^T(R\mathbb{I}R^Tx)=x^T(R\mathbb{I}y)=x^T(Ry^{[k-1]})=y^Ty^{[k-1]}=\sum_{j\in[m]}y_j^k=\sum_{j\in[m]}x(e_j)^k
=\sum_{e\in E}x(e)^k.
\end{equation}
Thus it can be seen that when $k$ is even, $f(x)\geq0$ for any $x\in
\mathbb{R}^n$ and so $R\mathbb{I}R^{T}$ is positive semi-definite.

\begin{definition} The incidence $Q$-tensor of the uniform hypergraph $G$, denoted by $\mathcal{Q}^{*}=\mathcal{Q}^{*}(G)$, is defined as $\mathcal{Q}^{*}=R\mathbb{I}R^{T}$.
\end{definition}

From this definition of the incidence $Q$-tensor and the above formula \eqref{eQentry} for the entries of $\mathcal{Q}^{*}$ we also have that

\begin{align}\label{eeigenequcomp}
&(\mathcal{Q}^{*}x)_i=\sum_{i_2,\cdots,i_k=1}^n\mathcal{Q}^{*}_{ii_2,\cdots,i_k}x_{i_2}\cdots x_{i_k}\nonumber\\
&=\sum_{i_2,\cdots,i_k=1}^n\left (\sum_{j=1}^m r_{ij}r_{i_2j}\cdots r_{i_kj}\right )x_{i_2}\cdots x_{i_k}\nonumber\\
&=\sum_{i_2,\cdots,i_k=1}^n\left (\sum_{e\in E_i,i_2\in e,\cdots, i_k\in e}x_{i_2}\cdots x_{i_k}\right )\nonumber\\
&=\sum_{e\in E_i}\left (\sum_{i_2\in e}x_{i_2}\right )\cdots \left (\sum_{i_k\in e}x_{i_k}\right )\nonumber\\
&=\sum_{e\in E_i}x(e)^{k-1}
\end{align}

Xie and Chang \cite{XieChang13signLapHeig} defined the signless Laplacian tensor $T_Q$   of an even uniform hypergraph as the symmetric tensor associated with the polynomial
\begin{equation}\label{e-signLappoly1}
T_Qx^k:=\sum_{e_p\in E}Q(e_p)x^k,\ \ \ \ \forall x\in \mathbb{R}^n,
\end{equation}
where $Q(e_p)x^k=(x_{i_1}+x_{i_2}+\cdots+x_{i_k})^k$, for $e_p=\{i_1,i_2,\ldots,i_k\}\subseteq V$. Comparing Eqs.~\eqref{e-signLappoly1} and \eqref{e-signLappoly2},
clearly $f(x)=T_Qx^k$ and so  $T_Q=\mathcal{Q}^{*}$. That is, our incidence $Q$-tensor coincides with the signless Laplacian tensor $T_Q$ introduced by Xie and Chang, who
defined it for even uniform hypergraph.  Note that we do not put restriction on the parity of $k$, namely,
the incidence $Q$-tensor $\mathcal{Q}^{*}$ applies to both even and odd
uniform hypergraphs.

We prefer not to call $\mathcal{Q}^{*}$ the signless Laplacian tensor of
$G$. In spectral graph theory, the Laplacian matrix and the signless
Laplacian matrix appear together.  The Laplacian matrix is defined
as $L=D-A$, and the signless Laplacian matrix is defined as $Q=D+A$,
where $D$ is the degree diagonal matrix and $A$ is the adjacency matrix of
the graph.    Such a definition was generalized to hypergraphs in
\cite{Qi-Lap-signlessLap} and further studied in
\cite{HuQi14,HuQishao13,HuQiXie,Qishaowang14,shaoshanwu-manu}.  On
the other hand, the tensor $\mathcal{Q}^{*}=R\mathbb{I}R^T$ is closely related to the
incidence matrix $R$ of the hypergraph.
Thus, it is also adequate to be called the incidence $Q$-tensor of that hypergraph.

A  $k$th-order $n$-dimensional tensor $\mathcal{T}=(\mathcal{T}_{i_1i_2\cdots
i_k})$  is called \textit{reducible}, if there exists a nonempty proper
index subset $I\subset [n]$ such that
\begin{equation*}
\mathcal{T}_{i_1i_2\cdots i_k}=0, \,\,\, \  \ \ \ \forall i_1\in I,\  \ \forall i_2,\ldots, i_k\notin I.
\end{equation*}
$\mathcal{T}$ is called \textit{weakly reducible} \cite{Friedland},  if there exists a nonempty proper
index subset $I\subset [n]$ such that
\begin{equation*}
\mathcal{T}_{i_1i_2\cdots i_k}=0, \,\,\, \  \ \ \ \forall i_1\in I,\  \ \mbox {and at least one of the} \ \ i_2,\ldots, i_k\notin I.
\end{equation*}
If $\mathcal{T}$ is not reducible, then $\mathcal{T}$ is called \textit{irreducible}. If $\mathcal{T}$ is not weakly reducible, then $\mathcal{T}$ is called \textit{weakly irreducible}.

It is easy to see from the definition that irreducibility implies weak irreducibility.

It is proved in \cite{Friedland} and \cite{YangYang} that a uniform hypergraph $G$ is connected if and only if its adjacency tensor $\mathcal{A}$ is weakly irreducible (and also if and only if its signless Laplacian tensor $\mathcal{Q}$ is weakly irreducible). Now for the incidence $Q$-tensor $\mathcal{Q}^{*}$, we have the following result.

\begin{lemma}\label{lem-irred}
Let  $G$ be a $k$-uniform hypergraph on $n$ vertices. Then the incidence $Q$-tensor $\mathcal{Q}^{*}$ is irreducible if and only if $G$ is connected.
\end{lemma}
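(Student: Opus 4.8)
The plan is to translate the purely algebraic condition of (ir)reducibility into a combinatorial statement about how the edges of $G$ meet a bipartition of the vertex set, using the explicit entry formula \eqref{eQentry}: the entry $(\mathcal{Q}^{*})_{i_1 i_2 \cdots i_k}$ equals the number of edges $e \in E$ containing all of $i_1,\ldots,i_k$. Since irreducibility is the negation of reducibility and connectedness the negation of disconnectedness, I would instead prove the equivalent statement that $\mathcal{Q}^{*}$ is \emph{reducible} if and only if $G$ is \emph{disconnected}, handling the two implications separately.

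First I would treat the direction ``disconnected $\Rightarrow$ reducible''. Assuming $G$ is disconnected, let $I \subset [n]$ be the (nonempty, proper) vertex set of one connected component. No edge can meet both $I$ and $V \setminus I$, since such an edge would place a vertex of $I$ and a vertex outside $I$ on a common edge, hence in the same component. Consequently, for every $i_1 \in I$ and all $i_2,\ldots,i_k \notin I$, no single edge contains $i_1$ together with $i_2$, so by \eqref{eQentry} the entry $(\mathcal{Q}^{*})_{i_1 i_2 \cdots i_k}$ vanishes; this is exactly the defining condition for reducibility with respect to $I$.

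For the converse, ``reducible $\Rightarrow$ disconnected'', I would start from a nonempty proper $I \subset [n]$ witnessing reducibility and extract the combinatorial content by probing with the special index patterns $(\mathcal{Q}^{*})_{i_1,j,j,\ldots,j}$, where $i_1 \in I$ and $j \notin I$. By \eqref{eQentry} this entry counts the edges containing both $i_1$ and $j$, and reducibility forces it to be $0$; hence no edge meets both $I$ and $V \setminus I$. A routine induction on the length of a path, using the definition of a path from Section~\ref{sec-prel}, then shows that any path starting in $I$ stays inside $I$ (each successive edge lies entirely in $I$), so no vertex of $I$ is joined to a vertex of $V \setminus I$ and $G$ is disconnected.

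The only slightly delicate point, and the step I expect to be the main obstacle, is the converse: reducibility a priori controls only those entries whose last $k-1$ indices \emph{simultaneously} lie outside $I$, whereas disconnectivity is a statement about individual pairs of vertices. The key idea is that collapsing $i_2 = \cdots = i_k = j$ reduces the multi-index condition to the pairwise condition ``no edge joins $i_1$ and $j$'', which is precisely what is needed. I would also remark that, because $\mathcal{Q}^{*}$ is symmetric, the asymmetry in the definition of reducibility (which singles out the first index) is immaterial here, so no separate verification is required for the remaining index positions.
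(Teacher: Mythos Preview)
Your proposal is correct and is essentially the same as the paper's proof: both directions hinge on the observation that the entry $(\mathcal{Q}^{*})_{i,j,\ldots,j}$ counts edges containing both $i$ and $j$, so a witnessing index set $I$ for reducibility corresponds exactly to a vertex bipartition crossed by no edge. The only cosmetic difference is that the paper proves ``connected $\Rightarrow$ irreducible'' directly (for arbitrary $I$, walk along a path from $I$ to its complement to locate adjacent $i_r\in I$, $i_{r+1}\notin I$ and exhibit the nonzero entry $\mathcal{Q}^{*}_{i_r i_{r+1}\cdots i_{r+1}}$), whereas you argue the contrapositive ``reducible $\Rightarrow$ disconnected''; the path argument and the repeated-index probe are the same in both.
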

\begin{proof}
Sufficiency. Suppose that $G$ is connected.
 For any nonempty proper subset $I\subset [n]$, choose arbitrarily two vertices $i,j$ such that $i\in I$ and $j\in V\setminus I$. Because $G$ is connected, there exists a path $i_1 (=i),e_1,i_2,e_2,i_3,\ldots,e_q, i_{q+1}(=j)$ such that $i_{l},i_{l+1}\in e_{l}$ for $l=1,\ldots,q$. Since $i_1=i\in I$ and $i_{q+1}=j\in V\setminus I$, clearly there exists some $r(1\leq r\leq q)$ such that $i_r\in I$ and $i_{r+1}\in V\setminus I$. This implies that $\mathcal{Q}^{*}_{i_ri_{r+1}\cdots i_{r+1}}\neq0$ as there exists at least one edge (e.g. $e_{i_r}$) contains both $i_r$ and $i_{r+1}$. Thus $\mathcal{Q}^{*}$ is irreducible.

Necessity. Suppose that $G$ is disconnected. Assume that $G_1$ is a connected component of $G$, with vertex set $V_1=V(G_1)$. Then for any $i_1\in V_1$ and $i_2,\ldots,i_k\in V\setminus V_1$, there exists no edge containing all vertices $i_1,i_2,\cdots,i_k$. So $\mathcal{Q}^{*}_{i_1i_2\cdots i_k}=0$, and thus $\mathcal{Q}^{*}$ is reducible.
\end{proof}
Consequently, if $G$ is connected, then $\mathcal{Q}^{*}$ is irreducible. Since irreducible nonnegative tensor with a nonzero diagonal is primitive, this incidence $Q$-tensor $\mathcal{Q}^{*}$ is also a primitive tensor.

Let $\mathcal{T}$ be a $k$th-order $n$-dimensional nonnegative tensor. The spectral radius of  $\mathcal{T}$ is defined as $\rho(\mathcal{T})=\max\{|\lambda|\ :\ \lambda\ {\rm is\  an\ eigenvalue \  of}\ \mathcal{T}\}$. Part of Perron-Frobenius theorem for nonnegative tensors is stated in the following for reference, and for more details one can refer to a survey \cite{changqizhang}.

\begin{theorem}\label{thmperron}

If $\mathcal{T}$ is a  nonnegative tensor, then $\rho(\mathcal{T})$ is an
eigenvalue with a nonnegative eigenvector $x$
corresponding to it.

 If furthermore $\mathcal{T}$ is weakly irreducible, then $x$ is positive,
and for any eigenvalue $\lambda$ with nonnegative
    eigenvector,  $\lambda=\rho(\mathcal{T})$. Moreover, the
    nonnegative eigenvector is unique up to a constant multiple.
\end{theorem}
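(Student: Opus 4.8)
The plan is to prove the existence statement for a general nonnegative $\mathcal{T}$ by a continuity argument from the strictly positive case, and then to treat the weakly irreducible refinements separately. If $\mathcal{T}>0$ entrywise, set $\Delta=\{x\in\mathbb{R}^n:x_i\ge 0,\ \sum_i x_i=1\}$ and define $F:\Delta\to\Delta$ by forming $y_i=\big((\mathcal{T}x^{k-1})_i\big)^{1/(k-1)}$ and normalizing, $F(x)=y/\sum_j y_j$. Positivity of $\mathcal{T}$ makes every $(\mathcal{T}x^{k-1})_i$ strictly positive on $\Delta$, so $F$ is continuous and sends $\Delta$ into its interior; Brouwer's fixed point theorem then produces $x^{*}$ with $F(x^{*})=x^{*}$, and undoing the normalization gives a positive eigenpair $\mathcal{T}(x^{*})^{k-1}=\lambda(x^{*})^{[k-1]}$ with $\lambda>0$. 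That this $\lambda$ equals $\rho(\mathcal{T})$ follows from the Collatz--Wielandt characterization $\rho(\mathcal{T})=\inf_{y>0}\max_i(\mathcal{T}y^{k-1})_i/y_i^{k-1}$, valid for irreducible (in particular positive) tensors: evaluating at $y=x^{*}$, where the ratio is constantly $\lambda$, gives $\rho(\mathcal{T})\le\lambda$, while $\lambda\le\rho(\mathcal{T})$ because $\lambda$ is an eigenvalue; see \cite{changqizhang}.

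For a general nonnegative $\mathcal{T}$ I would apply the positive case to the perturbations $\mathcal{T}+\varepsilon\mathcal{J}$, where $\mathcal{J}$ is the all-ones tensor, obtaining positive eigenpairs $(\lambda_\varepsilon,x_\varepsilon)$ with $x_\varepsilon\in\Delta$ and $\lambda_\varepsilon=\rho(\mathcal{T}+\varepsilon\mathcal{J})$. Since $\Delta$ is compact I may extract $x_\varepsilon\to x\ge0$ along a sequence $\varepsilon\downarrow0$; as $\sum_i x_i=1$ the limit is a genuine eigenvector, and continuity of the tensor action gives $\mathcal{T}x^{k-1}=\lambda x^{[k-1]}$ with $\lambda=\lim\lambda_\varepsilon\ge0$. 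Because the spectral radius of a nonnegative tensor depends continuously on its entries, $\lambda=\lim\rho(\mathcal{T}+\varepsilon\mathcal{J})=\rho(\mathcal{T})$, which proves the first assertion.

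Assume now the refined hypothesis. Under full irreducibility positivity is immediate: if $w\ge0$ is any eigenvector, say for eigenvalue $\mu$, and $S=\operatorname{supp}(w)\ne[n]$, then irreducibility applied to $I=[n]\setminus S$ produces $i_1\notin S$ and $i_2,\dots,i_k\in S$ with $\mathcal{T}_{i_1 i_2\cdots i_k}>0$, whence $(\mathcal{T}w^{k-1})_{i_1}\ge\mathcal{T}_{i_1 i_2\cdots i_k}w_{i_2}\cdots w_{i_k}>0$, contradicting $(\mathcal{T}w^{k-1})_{i_1}=\mu w_{i_1}^{k-1}=0$; hence $w>0$, and in particular the Perron vector $x$ is positive. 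I expect the main obstacle to be that the theorem assumes only \emph{weak} irreducibility, for which this support argument fails: there positivity has to be extracted from the equivalence of weak irreducibility with strong connectivity of the directed graph associated with $\mathcal{T}$, together with a contraction estimate for the normalized map in Hilbert's projective metric (see \cite{changqizhang}). I note that the incidence $Q$-tensor studied here is fully irreducible whenever $G$ is connected by Lemma~\ref{lem-irred}, so the clean argument already suffices for the applications in this paper.

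With positivity in hand, the remaining conclusions follow by a monotone comparison against $x>0$. For any eigenvector $w$, which is then positive, with eigenvalue $\mu$, put $t^{*}=\max_i w_i/x_i$ and $s^{*}=\min_i w_i/x_i$, both positive. Since $v\mapsto\mathcal{T}v^{k-1}$ is order preserving (as $\mathcal{T}\ge0$), evaluating at an index attaining $t^{*}$ and using $w\le t^{*}x$ gives $\mu\le\rho(\mathcal{T})$, while the same evaluation at an index attaining $s^{*}$, using $w\ge s^{*}x$, gives $\mu\ge\rho(\mathcal{T})$; thus $\mu=\rho(\mathcal{T})$, which is exactly the statement that any eigenvalue with a nonnegative eigenvector equals $\rho(\mathcal{T})$. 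Taking in particular a second eigenvector $w$ for $\rho(\mathcal{T})$, equality is forced at the index $p$ attaining $t^{*}$, namely $(\mathcal{T}w^{k-1})_p=(\mathcal{T}(t^{*}x)^{k-1})_p$, and combined with $w\le t^{*}x$ and $\mathcal{T}\ge0$ this forces $w_{i_j}=t^{*}x_{i_j}$ for every $(i_2,\dots,i_k)$ with $\mathcal{T}_{p\,i_2\cdots i_k}>0$. Setting $J=\{i:w_i=t^{*}x_i\}$ and repeating this at each index of $J$, the equality propagates along the positive entries of $\mathcal{T}$, so that $\mathcal{T}_{i\,i_2\cdots i_k}=0$ whenever $i\in J$ and some $i_j\notin J$; weak irreducibility then forces $J=[n]$, giving $w=t^{*}x$. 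Hence the nonnegative Perron eigenvector is unique up to a positive scalar multiple, completing the proof.
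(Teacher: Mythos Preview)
The paper does not actually prove this theorem; it is quoted as a known result from the Perron--Frobenius theory for nonnegative tensors, with a pointer to the survey \cite{changqizhang}. So there is no ``paper's own proof'' to compare your attempt against.

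Your sketch is largely the standard route and is correct in the parts you do carry out: the Brouwer fixed-point construction for strictly positive $\mathcal{T}$, the limiting argument via $\mathcal{T}+\varepsilon\mathcal{J}$ for general nonnegative $\mathcal{T}$, and the monotone-comparison proof that any eigenvalue with a nonnegative eigenvector equals $\rho(\mathcal{T})$ together with uniqueness of the eigenvector up to scaling. (Note that your propagation argument for uniqueness, via the set $J=\{i:w_i=t^{*}x_i\}$, already works under weak irreducibility, since what you derive is precisely that $\mathcal{T}_{i\,i_2\cdots i_k}=0$ whenever $i\in J$ and some $i_j\notin J$.) The one genuine gap is the one you yourself flag: strict positivity of the Perron vector under mere \emph{weak} irreducibility, which you do not prove but defer to \cite{changqizhang}.

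One correction to your side remark: it is not true that ``the clean argument already suffices for the applications in this paper'' on the grounds that $\mathcal{Q}^{*}$ is fully irreducible. The paper also invokes Theorem~\ref{thmperron} for the adjacency tensor $\mathcal{A}$ and the signless Laplacian tensor $\mathcal{Q}$, which for a connected hypergraph are only \emph{weakly} irreducible (see the paragraph preceding Lemma~\ref{lem-irred}). In particular, the proof of Theorem~\ref{thm-edgemoving} repeatedly uses that the principal eigenvectors of $\mathcal{A}(G)$ and $\mathcal{Q}(G)$ are strictly positive. So the weakly irreducible case is genuinely needed here, and your deferral of positivity in that case is a real gap for the paper's purposes, not merely a cosmetic one.
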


Let $\mathbb{R}_{+}^n=\{x\in \mathbb{R}^n \ | \ x\ge 0\}$.

\begin{lemma}[\cite{HuQi13normLap}]\label{lem-radinonnegsym}
Let $\mathcal{T}$ be a symmetric nonnegative tensor of order $k$ and dimension $n$. Then
\begin{equation}\label{eradiusnonsymten}
\rho(\mathcal{T})=\max\{x^T(\mathcal{T}x)\ |\ x\in \mathbb{R}_{+}^n, \sum_{i=1}^nx_i^k=1\}.
\end{equation}
Furthermore, $x\in \mathbb{R}_{+}^n$ with $\sum_{i=1}^nx_i^k=1$ is an eigenvector of $\mathcal{T}$ corresponding to $\rho(\mathcal{T})$ if and only if it is an optimal solution of the maximization problem \eqref{eradiusnonsymten}.
\end{lemma}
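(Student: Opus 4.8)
The plan is to prove the two inequalities $\rho(\mathcal{T})\le M$ and $M\le\rho(\mathcal{T})$, where $M$ denotes the maximum on the right-hand side of \eqref{eradiusnonsymten}, and then read off the equality case. I first observe that the feasible set $\{x\in\mathbb{R}_{+}^n : \sum_i x_i^k=1\}$ is compact and the objective $x^T(\mathcal{T}x)=\mathcal{T}x^k$ is continuous, so the maximum $M$ is attained at some $\hat x$.

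For the inequality $\rho(\mathcal{T})\le M$, I would invoke Theorem~\ref{thmperron}: $\rho(\mathcal{T})$ is an eigenvalue with a nonnegative eigenvector $u$, so $\mathcal{T}u^{k-1}=\rho(\mathcal{T})u^{[k-1]}$. Scaling $u$ so that $\sum_i u_i^k=1$ makes $u$ feasible, and contracting the eigenequation with $u^T$ gives $u^T(\mathcal{T}u)=\rho(\mathcal{T})\sum_i u_i^k=\rho(\mathcal{T})$. Hence $M\ge\rho(\mathcal{T})$.

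The substantive direction is $M\le\rho(\mathcal{T})$, which I would obtain by showing that the optimal value $M$ is itself an eigenvalue of $\mathcal{T}$. Applying the Karush--Kuhn--Tucker conditions to the maximizer $\hat x$ of $\mathcal{T}x^k$ subject to $\sum_i x_i^k=1$ and $x\ge 0$, and using the symmetry of $\mathcal{T}$ (so that $\nabla(\mathcal{T}x^k)=k\,\mathcal{T}x^{k-1}$), I get a multiplier $\mu$ and multipliers $\nu_i\ge 0$ with $\nu_i\hat x_i=0$ satisfying $(\mathcal{T}\hat x^{k-1})_i=\mu\hat x_i^{k-1}-\tfrac1k\nu_i$. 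Contracting with $\hat x_i$ and summing over $i$, using the normalization and complementary slackness, shows $\mu=\mathcal{T}\hat x^k=M$. On coordinates with $\hat x_i>0$ the eigenequation $(\mathcal{T}\hat x^{k-1})_i=M\hat x_i^{k-1}$ then holds directly. The key point is the coordinates with $\hat x_i=0$: there stationarity gives $(\mathcal{T}\hat x^{k-1})_i=-\nu_i/k\le 0$, while nonnegativity of $\mathcal{T}$ and of $\hat x$ forces $(\mathcal{T}\hat x^{k-1})_i\ge 0$, so both sides vanish and the eigenequation $(\mathcal{T}\hat x^{k-1})_i=M\hat x_i^{k-1}=0$ holds there too. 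Thus $\mathcal{T}\hat x^{k-1}=M\hat x^{[k-1]}$, so $M$ is a real nonnegative eigenvalue of $\mathcal{T}$, giving $M\le\rho(\mathcal{T})$. Combining the two inequalities yields $M=\rho(\mathcal{T})$.

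The \emph{furthermore} statement then follows from the same computations. If $x$ is a normalized nonnegative eigenvector for $\rho(\mathcal{T})$, the contraction in the second paragraph shows $x^T(\mathcal{T}x)=\rho(\mathcal{T})=M$, so $x$ is optimal; conversely, the KKT analysis shows that any optimal $\hat x$ satisfies $\mathcal{T}\hat x^{k-1}=M\hat x^{[k-1]}=\rho(\mathcal{T})\hat x^{[k-1]}$, i.e.\ $\hat x$ is an eigenvector for $\rho(\mathcal{T})$. I expect the main obstacle to be the careful handling of the boundary coordinates $\hat x_i=0$ in the KKT step, where the nonnegativity of $\mathcal{T}$ is exactly what rescues the eigenequation; this is the place where the hypothesis that $\mathcal{T}$ is nonnegative (rather than merely symmetric) is essential.
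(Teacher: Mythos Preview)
The paper does not prove this lemma; it is quoted from \cite{HuQi13normLap} and used as a black box, so there is no in-paper argument to compare against. Your proof is correct and self-contained: the direction $\rho(\mathcal{T})\le M$ via the Perron--Frobenius eigenvector (Theorem~\ref{thmperron}) is immediate, and your KKT argument for $M\le\rho(\mathcal{T})$ is sound, including the sign convention (writing the problem as $\min(-f)$ subject to $-x\le 0$ gives $(\mathcal{T}\hat x^{k-1})_i=\mu\hat x_i^{k-1}-\nu_i/k$ with $\nu_i\ge 0$) and the constraint qualification (LICQ holds because the gradient $k\hat x^{[k-1]}$ of the equality constraint is supported exactly on $\{i:\hat x_i>0\}$, while the active inequality gradients $e_i$ span the complementary coordinates). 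Your identification of the boundary case $\hat x_i=0$ as the place where nonnegativity of $\mathcal{T}$ is essential is exactly right, and the ``furthermore'' clause does fall out of the two halves as you describe.
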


From Lemma~\ref{lem-radinonnegsym}, it follows immediately that $\rho(\mathcal{T})$ can also be expressed as follows
\[
\rho(\mathcal{T})=\max\left\{\frac{x^T(\mathcal{T}x)}{x^T(\mathbb{I}x)}\ |\ x\in \mathbb{R}_{+}^n, x\neq0\right\},
\]
where $\mathcal{T}$ and $\mathbb{I}$ have the same order and dimension.  Note that $x^T(\mathbb{I}x)=\sum_{i\in [n]}x_i^k= \ \parallel x\parallel_k^k$. By Theorem~\ref{thmperron}, for an weakly irreducible nonnegative tensor $\mathcal{T}$, it has a unique positive eigenvector $x$ with $\parallel x\parallel_k=1$  corresponding to $\rho(\mathcal{T})$ and then we call $x$ the \textit{principal eigenvector} of $\mathcal{T}$.

\section{The extremal spectral radii of $k$-uniform supertrees and $k$th power hypertrees on $n$ vertices}\label{sec-lar}

\vskip 0.2cm

In this section, we prove our main results Theorem 1 and Theorem 2. For this purpose, we first introduce the operation of ``moving edges" for hypergraphs, together with the two special cases of this operation: the edge-releasing operation and the total grafting operation. We study the perturbation of the three kinds of spectral radii of hypergraphs under these operations: the adjacency spectral radius, the signless Laplacian spectral radius and the incidence $Q$-spectral radius. We show that all these three kinds of spectral radii of supertrees strictly increase under the edge-releasing operation and the inverse of the total grafting operation.

Using these perturbation results, we prove that for all these three kinds of spectral radii, the hyperstar $\mathcal{S}_{n,k}$ attains uniquely the maximum spectral radius among all
$k$-uniform supertrees on $n$ vertices, and we give the exact values of these three kinds of spectral radii of $\mathcal{S}_{n,k}$.
We also determine (in Theorem~\ref{thm-second}) that for all these three kinds of spectral radii, $S^k(1,n'-3)$
attains uniquely the second largest spectral radius among all $k$-uniform supertrees on $n$ vertices (where $n'=\frac {n-1}{k-1}+1$). We also prove that for all these three kinds of spectral radii, the loose path $\mathcal{P}_{n,k}$
attains uniquely the minimum spectral radius among all $k$-th power hypertrees on $n$ vertices.

By Proposition~\ref{prop-cytj}, we know that there exists a $k$-uniform supertree with $n$ vertices if and only if $n-1$ is a multiple of $k-1$. So in this section, we always assume that $n-1$ is a multiple of $k-1$.

\vskip 0.2cm

Recall the Laplacian tensor and signless Laplacian tensor proposed by Qi \cite{Qi-Lap-signlessLap}. Let $G=(V,E)$ be a $k$-uniform hypergraph. The {\em adjacency tensor} of $G$ was defined in \cite{CoopDut12} as the $k$-th order $n$-dimensional tensor $\mathcal A$ whose $(i_1 \ldots i_k)$-entry is:
\begin{eqnarray*}
a_{i_1 \ldots i_k}=\left\{\begin{array}{cl}\frac{1}{(k-1)!}&\mbox{if}\;\{i_1,\ldots,i_k\}\in E,\\0&\mbox{otherwise}.\end{array}\right.
\end{eqnarray*}
Let $\mathcal D$ be a $k$-th order $n$-dimensional diagonal tensor with its diagonal element $d_{i\ldots i}$ being $d_i$, the degree of vertex $i$ in $G$, for all $i\in [n]$. Then $\mathcal L=\mathcal D-\mathcal A$ is the {\em Laplacian tensor} of the hypergraph $G$, and $\mathcal Q=\mathcal D+\mathcal A$ is the {\em signless Laplacian tensor} of the hypergraph $G$.

For a vector $x$ of dimension $n$ and a subset $U\subseteq [n]$, we write
$$x^U=\prod_{i\in U}x_i$$

By \cite{CoopDut12}, we have
$$x^T(\mathcal{A}(G)x)=\sum_{e\in E(G)}kx^e$$
and
$$(\mathcal{A}(G)x)_i=\sum_{e\in E_i(G)}x^{e\setminus \{i\}}$$

Also it is easy to calculate for the signless Laplacian tensor $\mathcal{Q}(G)$ that:

$$x^T(\mathcal{Q}(G)x)=\sum_{\{j_1,\cdots, j_k\}\in E(G)}(x_{j_1}^k+\cdots+x_{j_k}^k+kx_{j_1}\cdots x_{j_k})=\sum_{e\in E(G)}(x^{[k]}(e)+kx^e)$$
where $x^{[k]}(e)=x_{j_1}^k+\cdots +x_{j_k}^k$ for $e=\{j_1,\cdots,j_k\}$, and
$$(\mathcal{Q}(G)x)_i=d_i(G)x_i^{k-1}+\sum_{e\in E_i(G)}x^{e\setminus \{i\}}$$

Now we introduce the operation of $moving$ $edges$ on hypergraphs.

\begin{definition}\label{def-edgemoving}
Let $r\ge 1$, $G=(V,E)$ be a hypergraph with $u\in V$ and $e_1,\cdots,e_r\in E$, such that $u\notin e_i$ for $i=1,\cdots,r$. Suppose that $v_i\in e_i$ and write $e_i'=(e_i\setminus \{v_i\})\cup \{u\}  \  (i=1,\cdots,r)$. Let $G'=(V,E')$ be the hypergraph with $E'=(E\setminus\{e_1,\cdots,e_r\})\cup \{e_1',\cdots,e_r'\}$. Then we say that $G'$ is obtained from $G$ by moving edges $(e_1,\cdots,e_r)$ from $(v_1,\cdots,v_r)$ to $u$.
\end{definition}

\vskip 0.2cm

\noindent {\bf Remark:}

\noindent (1) The vertices $v_1,\cdots,v_r$ need not be distinct. That is, the repetition of some vertices in $v_1,\cdots,v_r$ is allowed.

\noindent (2) Generally speaking, the new hypergraph $G'$ may contain multiple edges. But if $G$ is acyclic and there is an edge $e\in E$ containing all the vertices $u,v_1,\cdots,v_r$, then $G'$ contains no multiple edges.

\begin{theorem}\label{thm-edgemoving} Let $r\ge 1$, $G$ be a connected hypergraph, $G'$ be the hypergraph obtained from $G$ by moving edges $(e_1,\cdots,e_r)$ from $(v_1,\cdots,v_r)$ to $u$, and $G'$ contains no multiple edges.  Then we have:

(1) If $x$ is the principal eigenvector of $\mathcal{A}(G)$ corresponding to $\rho(\mathcal{A}(G))$, and suppose that $x_u\ge \max_{1\leq i\leq r}\{x_{v_i}\}$, then   $\rho(\mathcal{A}(G'))>\rho(\mathcal{A}(G))$.

\noindent (2) If $x$ is the principal eigenvector of $\mathcal{Q}(G)$ corresponding to $\rho(\mathcal{Q}(G))$, and suppose that $x_u\ge \max_{1\leq i\leq r}\{x_{v_i}\}$, then   $\rho(\mathcal{Q}(G'))>\rho(\mathcal{Q}(G))$.

\noindent (3) If $x$ is the principal eigenvector of $\mathcal{Q}^{*}(G)$ corresponding to $\rho(\mathcal{Q}^{*}(G))$, and suppose that $x_u\ge \max_{1\leq i\leq r}\{x_{v_i}\}$, then $\rho(\mathcal{Q}^{*}(G'))>\rho(\mathcal{Q}^{*}(G))$
\end{theorem}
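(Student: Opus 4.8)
The plan is to treat all three cases by a single mechanism built on the variational characterization of the spectral radius in Lemma~\ref{lem-radinonnegsym}. For each of the symmetric nonnegative tensors $\mathcal{T}\in\{\mathcal{A},\mathcal{Q},\mathcal{Q}^{*}\}$, let $x$ be the principal eigenvector of $\mathcal{T}(G)$, so that $\|x\|_k=1$, $x\ge 0$, and (from the eigenvalue equation together with $\|x\|_k=1$) $x^T(\mathcal{T}(G)x)=\rho(\mathcal{T}(G))$. Since $G$ is connected, $\mathcal{T}(G)$ is weakly irreducible, so by Theorem~\ref{thmperron} in fact $x>0$. As $x$ is a feasible point in the maximization problem defining $\rho(\mathcal{T}(G'))$, Lemma~\ref{lem-radinonnegsym} gives
\[
\rho(\mathcal{T}(G'))\ge x^T(\mathcal{T}(G')x).
\]
It therefore suffices to prove $x^T(\mathcal{T}(G')x)>x^T(\mathcal{T}(G)x)$, which reduces everything to comparing the two homogeneous polynomials evaluated at the \emph{same} vector $x$.

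First I would establish the weak inequality $x^T(\mathcal{T}(G')x)\ge x^T(\mathcal{T}(G)x)$ by a term-by-term comparison using the explicit polynomial formulas recorded just before the theorem; only the moved edges contribute a difference. Writing $e_i'=(e_i\setminus\{v_i\})\cup\{u\}$ and using $u\notin e_i$, I would compute, for the adjacency tensor, $x^{e_i'}-x^{e_i}=(x_u-x_{v_i})\,x^{e_i\setminus\{v_i\}}$; for the signless Laplacian, the extra contribution $x^{[k]}(e_i')-x^{[k]}(e_i)=x_u^k-x_{v_i}^k$; and for the incidence $Q$-tensor,
\[
x(e_i')^k-x(e_i)^k=\bigl(x(e_i)+(x_u-x_{v_i})\bigr)^k-x(e_i)^k .
\]
In each case the hypothesis $x_u\ge x_{v_i}$ together with $x>0$ (so that $x^{e_i\setminus\{v_i\}}>0$, $x(e_i)>0$, and $t\mapsto t^k$ is increasing on $[0,\infty)$) forces every summand to be nonnegative, giving $x^T(\mathcal{T}(G')x)\ge x^T(\mathcal{T}(G)x)$.

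The main obstacle is promoting this to a \emph{strict} inequality, since the per-edge differences above can vanish (for instance when $x_u=x_{v_i}$). Here I would argue by contradiction. If $\rho(\mathcal{T}(G'))=\rho(\mathcal{T}(G))$, then the chain $\rho(\mathcal{T}(G'))\ge x^T(\mathcal{T}(G')x)\ge x^T(\mathcal{T}(G)x)=\rho(\mathcal{T}(G))$ collapses to a chain of equalities, so $x$ is an optimal solution of the maximization problem for $\mathcal{T}(G')$. By the second statement of Lemma~\ref{lem-radinonnegsym}, $x$ is then an eigenvector of $\mathcal{T}(G')$ for $\rho(\mathcal{T}(G'))=\rho(\mathcal{T}(G))$; since it is also an eigenvector of $\mathcal{T}(G)$ for the same eigenvalue, this yields $\mathcal{T}(G')x=\mathcal{T}(G)x$ component-wise. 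I would then isolate the single component at the vertex $u$. Using the formulas for $(\mathcal{T}(G)x)_i$ and the facts that the moved edges avoid $u$ in $G$ but all contain $u$ in $G'$, that $G'$ has no multiple edges, and that $e_i'\setminus\{u\}=e_i\setminus\{v_i\}$, the $u$-th component strictly increases:
\[
(\mathcal{A}(G')x)_u-(\mathcal{A}(G)x)_u=\sum_{i=1}^r x^{e_i\setminus\{v_i\}},
\]
\[
(\mathcal{Q}(G')x)_u-(\mathcal{Q}(G)x)_u=r\,x_u^{k-1}+\sum_{i=1}^r x^{e_i\setminus\{v_i\}},
\]
\[
(\mathcal{Q}^{*}(G')x)_u-(\mathcal{Q}^{*}(G)x)_u=\sum_{i=1}^r x(e_i')^{k-1},
\]
each strictly positive because $x>0$ and $r\ge 1$. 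This contradicts $\mathcal{T}(G')x=\mathcal{T}(G)x$, so equality is impossible and $\rho(\mathcal{T}(G'))>\rho(\mathcal{T}(G))$ holds in all three cases. The crux is thus the observation that, although the quadratic-form difference may be only weakly positive, the single eigenvector coordinate at $u$ must jump strictly, which is exactly what the "if and only if" in Lemma~\ref{lem-radinonnegsym} lets us exploit.
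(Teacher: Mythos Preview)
Your proposal is correct and follows essentially the same approach as the paper: use Lemma~\ref{lem-radinonnegsym} to get the weak inequality $\rho(\mathcal{T}(G'))\ge x^T(\mathcal{T}(G')x)\ge x^T(\mathcal{T}(G)x)=\rho(\mathcal{T}(G))$ via the edge-by-edge comparison, and then upgrade to strict inequality by assuming equality, deducing that $x$ is also an eigenvector of $\mathcal{T}(G')$ for the same eigenvalue, and deriving a contradiction from the $u$-th coordinate of the eigenvalue equation. Your displayed differences at $u$ match the paper's exactly (noting $e_i'\setminus\{u\}=e_i\setminus\{v_i\}$ and $d_u(G')-d_u(G)=r$), and your remark that the ``no multiple edges'' hypothesis guarantees the new edges $e_1',\dots,e_r'$ are genuinely new contributions to $E_u(G')$ is precisely the point needed to make those formulas valid.
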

\begin{proof} Let $e_i'=(e_i\setminus \{v_i\})\cup \{u\}  \  (i=1,\cdots,r)$ as in Definition~\ref{def-edgemoving}.

(1). By the hypothesis we obviously have $x^{e_i'}/ x^{e_i}=x_u/x_{v_i}\ge 1$. Thus by using Lemma~\ref{lem-radinonnegsym} and the above expression for $x^T(\mathcal{A}(G)x)$ we have
\begin{align*}
&\rho(\mathcal{A}(G'))-\rho(\mathcal{A}(G))\\
&\geq x^T(\mathcal{A}(G')x)-\rho(\mathcal{A}(G))\\
&= x^T(\mathcal{A}(G')x)-x^T(\mathcal{A}(G)x)\\
&=\sum_{e\in E(G')}kx^e-\sum_{e\in E(G)}kx^e\\
&=k\sum_{i=1}^r(x^{e_i'}-x^{e_i})  \ge 0.
\end{align*}
If the equality holds, then $\rho(\mathcal{A}(G'))=x^T(\mathcal{A}(G')x)$ and so $x$ is the eigenvector of $\mathcal{A}(G')$ corresponding to $\rho(\mathcal{A}(G'))=\rho(\mathcal{A}(G))$ by Lemma~\ref{lem-radinonnegsym}. In this case, using the above expression for $(\mathcal{A}(G)x)_{u}$ and $(\mathcal{A}(G')x)_{u}$, we have
\begin{align*}
0&=(\rho(\mathcal{A}(G'))-\rho(\mathcal{A}(G)))x_{u}^{k-1}\\
&= (\mathcal{A}(G')x)_{u}-  (\mathcal{A}(G)x)_{u}    \\
&= \sum_{e\in E_u(G')}x^{e\setminus \{u\}}-\sum_{e\in E_u(G)}x^{e\setminus \{u\}} \\
&= \sum_{i=1}^rx^{e_i'\setminus \{u\}}>0   \\
\end{align*}
a contradiction.

(2). By the hypothesis we have $x^{[k]}(e_i')-x^{[k]}(e_i)=x_u^k-x_{v_i}^k\ge 0$, and $x^{e_i'}/ x^{e_i}=x_u/x_{v_i}\ge 1$. Thus by using Lemma~\ref{lem-radinonnegsym} and the above expression for $x^T(\mathcal{Q}(G)x)$ we have
\begin{align*}
&\rho(\mathcal{Q}(G'))-\rho(\mathcal{Q}(G))\\
&\geq x^T(\mathcal{Q}(G')x)-\rho(\mathcal{Q}(G))\\
&= x^T(\mathcal{Q}(G')x)-x^T(\mathcal{Q}(G)x)\\
&=\sum_{e\in E(G')}(x^{[k]}(e)+kx^e)-\sum_{e\in E(G)}(x^{[k]}(e)+kx^e)\\
&=\sum_{i=1}^r(x^{[k]}(e_i')- x^{[k]}(e_i))+k\sum_{i=1}^r(x^{e_i'}-x^{e_i})  \ge 0,
\end{align*}
Also by using the above expression for $(\mathcal{Q}(G)x)_{u}$, the strict inequality can be obtained similarly from the following relation:
$$(\mathcal{Q}(G')x)_{u}-  (\mathcal{Q}(G)x)_{u} = (d_{u}(G')-d_{u}(G))x_u^{k-1}+ \sum_{i=1}^rx^{e_i'\setminus \{u\}}>0$$
where $d_{u}(G')-d_{u}(G)=r$.

 (3). By the hypothesis we have:
$$x(e_i')-x(e_i)=x_u-x_{v_i}\ge 0 \qquad (i=1,\cdots, r).$$
Thus we have $x(e_i')^k\ge x(e_i)^k$, since $x$ is a positive vector. Now by Lemma~\ref{lem-radinonnegsym} and the equation~(\ref{e-signLappoly2}) we have
\begin{align*}
&\rho(\mathcal{Q}^{*}(G'))-\rho(\mathcal{Q}^{*}(G))\\
&\geq x^T(\mathcal{Q}^{*}(G')x)-\rho(\mathcal{Q}^{*}(G))\\
&= x^T(\mathcal{Q}^{*}(G')x)-x^T(\mathcal{Q}^{*}(G)x)\\
&=\sum_{e\in E(G')}x(e)^k -\sum_{e\in E(G)}x(e)^k\\
&=\sum_{i=1}^r(x(e_i')^k- x(e_i)^k)\ge 0,
\end{align*}
If the equality holds, then $\rho(\mathcal{Q}^{*}(G'))=x^T(\mathcal{Q}^{*}(G')x)$ and so $x$ is the eigenvector of $\mathcal{Q}^{*}(G')$ corresponding to $\rho(\mathcal{Q}^{*}(G'))=\rho(\mathcal{Q}^{*}(G))$ by Lemma~\ref{lem-radinonnegsym}. In this case, applying eigenvalue equations and equation~(\ref{eeigenequcomp}) to the vertex $u$ in $G'$ and $G$, we find
\begin{align*}
0&=(\rho(\mathcal{Q}^{*}(G'))-\rho(\mathcal{Q}^{*}(G)))x_{u}^{k-1}\\
&= (\mathcal{Q}^{*}(G')x)_{u}-  (\mathcal{Q}^{*}(G)x)_{u}    \\
&= \sum_{e\in E_{u}(G')}x(e)^{k-1} -\sum_{e\in E_{u}(G)}x(e)^{k-1}    \\
&=\sum_{i=1}^rx(e_i')^{k-1}>0 ,   \\
\end{align*}
a contradiction.

\end{proof}

Recall that a linear hypergraph is a hypergraph each pair of whose edges has at most one common vertex. We have proved in Proposition~\ref{prop-edgenumber} that all supertrees are linear hypergraphs.

In a $k$-uniform   linear hypergraph $G$, an edge $e$ is called a \textit{pendent edge} if $e$ contains exactly $k-1$ vertices of degree one. If $e$ is not a pendent edge, then it is also called a \textit{non-pendent edge}.

The following $edge$-$releasing$ operation on linear hypergraphs is a special case of the above defined edge moving operation.

\begin{definition}\label{def-edgereleasing}
Let $G$ be a $k$-uniform  linear hypergraph, $e$ be a non-pendent edge of $G$ and $u\in e$. Let $\{e_1,e_2,\ldots,e_r\}$ be all the edges of $G$ adjacent to $e$ but not containing $u$, and suppose that $e_i\cap e=\{v_i\}$ for $i=1,\ldots,r$. Let $G'$ be the hypergraph obtained from $G$ by moving edges $(e_1,\cdots,e_r)$ from $(v_1,\cdots,v_r)$ to $u$. Then $G'$ is said to be obtained from $G$ by an edge-releasing operation on $e$ at $u$.
\end{definition}

In other words, edge-releasing a non-pendent edge $e$ of $G$ at $u$ means moving all the edges adjacent to $e$ but not containing $u$ from their commom vertices with $e$ to $u$.

Since $e$ is a non-pendent edge of $G$, $e$ contains at least one non-pendent vertex different from $u$. So by the definition of linear hypergraph, there exists at least one edge ``adjacent to $e$ but not containing $u$". This means that edge-releasing operation is a special case of the edge-moving operation in Definition~\ref{def-edgemoving}.

From the above definition we can see that, if $G'$ and $G''$ are the hypergraphs obtained from a $k$-uniform linear hypergraph $G$ by an edge-releasing operation on some edge $e$ at $u$ and at $v$, respectively. Then $G'$ and $G''$ are isomorphic. Also, if  $G$ is acyclic, then $G'$ contains no multiple edges.

\begin{proposition}
Let $G'$ be a hypergraph obtained from a $k$-uniform supertree $G$ by edge-releasing a non-pendent edge $e$ of $G$. Then $G'$ is also a supertree.
\end{proposition}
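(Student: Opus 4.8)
The plan is to reduce the statement to a pure connectivity claim by invoking the edge-counting characterization of acyclicity. First I would record the cheap structural facts. Since $G'$ has the same vertex set $V$ as $G$, we have $|V(G')| = n$, and $G'$ is again $k$-uniform because each new edge $e_i' = (e_i \setminus \{v_i\}) \cup \{u\}$ satisfies $|e_i'| = |e_i| = k$ (here $u \notin e_i$). Moreover $G$ is acyclic and, by Definition~\ref{def-edgereleasing}, the released edge $e$ contains all of $u, v_1, \ldots, v_r$ (each $v_i$ lies in $e$), so part (2) of the Remark following Definition~\ref{def-edgemoving} guarantees that $G'$ has no multiple edges. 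Hence $|E(G')| = |E(G)| = \frac{n-1}{k-1}$ by Proposition~\ref{prop-edgenumber}. By the $k$-uniform specialization of Proposition~\ref{prop-hypergraphacy1}, a connected $k$-uniform hypergraph on $n$ vertices with $\frac{n-1}{k-1}$ edges is automatically acyclic; so it suffices to prove that $G'$ is connected, and acyclicity will come for free.

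To establish connectivity I would analyze the component structure of $G$ after deleting $e$. Write the vertices of $e$ as $u = w_0, w_1, \ldots, w_{k-1}$. Since $G$ is a supertree, deleting the edge $e$ (but keeping all vertices) splits $G$ into exactly $k$ connected components $C_0, \ldots, C_{k-1}$ with $w_j \in C_j$: if two of the $w_j$ were joined by a path avoiding $e$, that path together with $e$ would form a cycle, contradicting acyclicity (a quick count $c = n - (m-1)(k-1) = k$ confirms the number of components). Each released edge $e_i$ meets $e$ in a single vertex $v_i = w_{j(i)}$ with $j(i) \geq 1$ — linearity of $G$ forces $|e_i \cap e| = 1$, and $v_i \neq u$ because $u \notin e_i$ — and $e_i$ together with the branch hanging off it lies in $C_{j(i)}$. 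The operation replaces each $e_i$ by $e_i' = (e_i \setminus \{w_{j(i)}\}) \cup \{u\}$, which reattaches to $u$ the whole subtree of $C_{j(i)}$ that was previously joined to $w_{j(i)}$ through $e_i$.

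It then remains to check that every vertex is connected to $u$ in $G'$. Vertices of $C_0$ retain all their incident edges, since any edge of $C_0$ either contains $u$ or is not adjacent to $e$ at all, and in either case is not released; hence they stay connected to $u$ exactly as in $G$. Each $w_j$ with $j \geq 1$ is joined to $u$ through the unchanged edge $e$. And any $z \in C_j \setminus \{w_j\}$ was reached in $G$ from $w_j$ along a path whose first edge is one of the released $e_i$; after the operation the identical sequence of edges, with $e_i$ replaced by $e_i'$, yields a path from $u$ to $z$ in $G'$. Thus $G'$ is connected. I expect the connectivity verification to be the only real obstacle: one must argue that moving \emph{all} edges incident to the $w_j$ $(j \geq 1)$ simultaneously severs no branch, which is exactly where the component decomposition of $G - e$ and the linearity of $G$ are needed; acyclicity of $G'$ is not checked directly but extracted from the edge count.
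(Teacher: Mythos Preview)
Your proposal is correct and follows essentially the same approach as the paper: establish that $G'$ is connected and has the same edge count $\frac{n-1}{k-1}$, then invoke Proposition~\ref{prop-hypergraphacy1} to conclude acyclicity. The only difference is that the paper asserts connectivity of $G'$ in one line (``it is easy to see''), whereas you supply a detailed verification via the component decomposition of $G-e$; your extra care here is justified and the argument is sound.
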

\begin{proof} Since $G$ is connected, it is easy to see that $G'$ is also connected. Also by the definition of the edge-releasing operation we can see that, $G$ and $G'$ have the same number of edges. Thus we have $|E(G')|=|E(G)|=\frac {n-1}{k-1}$. So by Proposition \ref{prop-hypergraphacy1} we conclude that $G'$ is also a supertree.
\end{proof}

 \begin{theorem}\label{thm-edgerelease}
Let $G'$ be a supertree obtained from a $k$-uniform supertree $G$ by edge-releasing a non-pendent edge $e$ of $G$ at $u$.  Then we have $\rho(\mathcal{A}(G'))>\rho(\mathcal{A}(G))$,    $\rho(\mathcal{Q}(G'))>\rho(\mathcal{Q}(G))$, and $\rho(\mathcal{Q}^{*}(G'))>\rho(\mathcal{Q}^{*}(G))$.
\end{theorem}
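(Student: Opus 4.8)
The plan is to deduce this theorem directly from the edge-moving perturbation result, Theorem~\ref{thm-edgemoving}, by exploiting the freedom in the choice of the release vertex. Recall from Definition~\ref{def-edgereleasing} that edge-releasing $e$ at $u$ is precisely the edge-moving operation of Definition~\ref{def-edgemoving} in which $(e_1,\dots,e_r)$ are all the edges adjacent to $e$ but not containing $u$, and $v_i$ is the unique vertex of $e_i\cap e$ (unique because $G$, being a supertree, is linear). Theorem~\ref{thm-edgemoving} yields the desired strict increase for each of $\mathcal{A},\mathcal{Q},\mathcal{Q}^{*}$, \emph{provided} that the relevant principal eigenvector $x$ satisfies $x_u\ge\max_{1\le i\le r}x_{v_i}$ and that $G'$ has no multiple edges. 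The second condition is automatic: since $G$ is acyclic, the remark following Definition~\ref{def-edgereleasing} already guarantees that the released hypergraph contains no multiple edges. The whole proof therefore reduces to arranging the eigenvector inequality.

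First I would fix one of the three tensors, say $\mathcal{M}\in\{\mathcal{A},\mathcal{Q},\mathcal{Q}^{*}\}$. Since $G$ is a supertree it is connected, so $\mathcal{M}(G)$ is weakly irreducible (for $\mathcal{A}$ and $\mathcal{Q}$ this is the connectivity--weak-irreducibility equivalence recalled before Lemma~\ref{lem-irred}; for $\mathcal{Q}^{*}$ it is Lemma~\ref{lem-irred}). By Theorem~\ref{thmperron} the principal eigenvector $x$ exists, is positive, and is unique up to scaling. Now choose $u^{*}\in e$ attaining $x_{u^{*}}=\max_{i\in e}x_i$. The key observation is that every moved vertex $v_i$ lies in $e$: indeed $v_i\in e_i\cap e\subseteq e$. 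Hence $x_{u^{*}}\ge x_{v_i}$ for all $i$, so the hypothesis of Theorem~\ref{thm-edgemoving} is met when we release $e$ at $u^{*}$, giving $\rho(\mathcal{M}(G^{*}))>\rho(\mathcal{M}(G))$, where $G^{*}$ denotes the result of releasing $e$ at $u^{*}$.

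Finally I would invoke the isomorphism-invariance of the release vertex. The excerpt records that releasing the same edge $e$ at two different vertices produces isomorphic hypergraphs; hence $G'\cong G^{*}$ and in particular $\rho(\mathcal{M}(G'))=\rho(\mathcal{M}(G^{*}))$. Combining the last two facts gives $\rho(\mathcal{M}(G'))>\rho(\mathcal{M}(G))$, and since $\mathcal{M}$ was an arbitrary one of the three tensors, the theorem follows.

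The main obstacle is exactly the eigenvector comparison $x_u\ge\max_i x_{v_i}$: the statement fixes no relation between the given vertex $u$ and the $v_i$, and for an arbitrary release vertex this inequality can fail. The device that overcomes it — and the only genuinely nontrivial step — is the twofold use of structure: first, all comparison vertices $v_i$ lie inside the single edge $e$, so maximizing $x$ over $e$ dominates all of them simultaneously; and second, the value $\rho(\mathcal{M}(G'))$ is independent of the release vertex up to isomorphism, so we are free to release at the maximizing vertex $u^{*}$ rather than the given $u$. Once these two observations are in place, the result is immediate from Theorem~\ref{thm-edgemoving}, with no further computation required.
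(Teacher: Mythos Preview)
Your proposal is correct and is essentially identical to the paper's own proof: both pick the vertex $u^{*}\in e$ maximizing the principal eigenvector over $e$, observe that all the ``from'' vertices $v_i$ lie in $e$ so the hypothesis of Theorem~\ref{thm-edgemoving} is satisfied at $u^{*}$, and then use the isomorphism between releases at different vertices to transfer the conclusion to the given $G'$. The only (harmless) slip is notational: when you release at $u^{*}$ rather than $u$, the collection of moved edges $(e_1,\dots,e_r)$ changes too, but your key observation $v_i\in e_i\cap e\subseteq e$ applies equally well to that new collection.
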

\begin{proof} To prove $\rho(\mathcal{A}(G'))>\rho(\mathcal{A}(G))$, take $x$ to be the principal eigenvector of $\mathcal{A}(G)$ corresponding to $\rho(\mathcal{A}(G))$, and take $v\in e$ such that $x_v= \max_{i\in e}\{x_i\}$. Let $G''$ be the supertree obtained from $G$ by edge-releasing the edge $e$ of $G$ at $v$, then $G'$ and $G''$ are isomorphic. But by Definition~\ref{def-edgereleasing}, $G''$ is obtained from $G$ by moving some edges from some vertices of $e$ to $v$. So by Theorem~\ref{thm-edgemoving} we have $\rho(\mathcal{A}(G'))=\rho(\mathcal{A}(G''))>\rho(\mathcal{A}(G))$.

Similarly we can prove $\rho(\mathcal{Q}(G'))>\rho(\mathcal{Q}(G))$ and $\rho(\mathcal{Q}^{*}(G'))>\rho(\mathcal{Q}^{*}(G))$.
\end{proof}

\begin{theorem}\label{thm-hyperstar1}
Let $\mathfrak{T}$ be a  $k$-uniform   supertree on $n$ vertices with $m$ edges (here $m=\frac {n-1}{k-1}$).   Then
\begin{equation*}
\rho(\mathcal{A}(\mathfrak{T}))\leq \rho(\mathcal{A}(\mathcal{S}_{n,k}))
\end{equation*}
and
\begin{equation*}
\rho(\mathcal{Q}(\mathfrak{T}))\leq \rho(\mathcal{Q}(\mathcal{S}_{n,k}))
\end{equation*}
and
\begin{equation*}
\rho(\mathcal{Q}^{*}(\mathfrak{T}))\leq \rho(\mathcal{Q}^{*}(\mathcal{S}_{n,k}))
\end{equation*}
with either one of the equalities holding if and only if $\mathfrak{T}$ is the hyperstar $\mathcal{S}_{n,k}$.
\end{theorem}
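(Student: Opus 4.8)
The plan is to derive the theorem directly from the strict monotonicity of all three spectral radii under the edge-releasing operation (Theorem~\ref{thm-edgerelease}), by showing that every $k$-uniform supertree on $n$ vertices can be transformed into the hyperstar $\mathcal{S}_{n,k}$ through a finite sequence of edge-releasing operations. Since each such operation strictly increases each of $\rho(\mathcal{A})$, $\rho(\mathcal{Q})$, and $\rho(\mathcal{Q}^{*})$, and the terminal object of the process is $\mathcal{S}_{n,k}$, it will follow that $\mathcal{S}_{n,k}$ uniquely maximizes all three radii. The case $m=1$ (that is, $n=k$) is trivial, so I assume $m\geq 2$ throughout.

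The combinatorial heart of the argument is a structural characterization: \emph{a $k$-uniform supertree with $m\geq 2$ edges equals $\mathcal{S}_{n,k}$ if and only if every one of its edges is pendent}. For the nontrivial direction, suppose every edge is pendent. Since the supertree is linear and connected, each edge contains exactly $k-1$ vertices of degree one, hence exactly one vertex of degree at least two (it cannot be isolated), which I call the \emph{center} of that edge. If two edges are adjacent, their unique common vertex has degree at least two and is therefore the center of both; by connectedness all edges share a single center, which is precisely the defining property of $\mathcal{S}_{n,k}$. Consequently, any supertree $\mathfrak{T}\neq \mathcal{S}_{n,k}$ possesses at least one non-pendent edge, so an edge-releasing operation can be applied to it.

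Next I would use the number of non-pendent edges as a monotone potential. Releasing a non-pendent edge $e$ at a vertex $u\in e$ moves every edge meeting $e$ outside $u$ onto $u$; by linearity these are exactly all edges other than $e$ that share some vertex $v_i\in e\setminus\{u\}$, so after the move each such $v_i$ lies only in $e$ and $e$ becomes pendent. A short verification shows that every moved edge $e_i'=(e_i\setminus\{v_i\})\cup\{u\}$ retains the pendent/non-pendent type it had before (its vertices other than $v_i$ are untouched, while $v_i$ is replaced by the now high-degree $u$), and that no edge left fixed by the operation changes type. Hence each edge-releasing operation decreases the number of non-pendent edges by exactly one, while preserving the property of being a supertree by the proposition preceding Theorem~\ref{thm-edgerelease}.

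Starting from an arbitrary $\mathfrak{T}\neq \mathcal{S}_{n,k}$ and repeatedly releasing any non-pendent edge therefore produces, after finitely many steps, a supertree with no non-pendent edge, which by the characterization is $\mathcal{S}_{n,k}$. Along the resulting chain $\mathfrak{T}=\mathfrak{T}_0,\mathfrak{T}_1,\ldots,\mathfrak{T}_t=\mathcal{S}_{n,k}$ each of the three spectral radii strictly increases at every step by Theorem~\ref{thm-edgerelease}, giving $\rho(\mathcal{A}(\mathfrak{T}))<\rho(\mathcal{A}(\mathcal{S}_{n,k}))$ and likewise for $\mathcal{Q}$ and $\mathcal{Q}^{*}$, with equality precisely when $t=0$, i.e. when $\mathfrak{T}=\mathcal{S}_{n,k}$. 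I expect the main obstacle to lie in the monotonicity step: one must verify carefully, using linearity and acyclicity, that releasing an edge turns it pendent and leaves the type of every other edge unchanged, so that the non-pendent count strictly drops and the process provably terminates at the hyperstar rather than stalling or cycling.
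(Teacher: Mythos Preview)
Your proposal is correct and follows essentially the same route as the paper: reduce any supertree to $\mathcal{S}_{n,k}$ by repeated edge-releasing and invoke Theorem~\ref{thm-edgerelease} at each step. The only difference is the choice of potential --- you count non-pendent \emph{edges} and show this drops by exactly one per release, whereas the paper inducts on the number $N_2(\mathfrak{T})$ of non-pendent \emph{vertices}; the latter requires slightly less bookkeeping (no need to verify that moved and untouched edges keep their type), but both arguments terminate at the hyperstar for the same structural reason.
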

\begin{proof} We use induction on the number of non-pendent vertices (vertices with degrees at least two) $N_2(\mathfrak{T})$. If $N_2(\mathfrak{T})=1$, then $\mathfrak{T}$ is the hyperstar $\mathcal{S}_{n,k}$. Now we assume that $N_2(\mathfrak{T})\ge 2$, namely $\mathfrak{T}$ is not a hyperstar. Suppose $x$ and $y$ be two  non-pendent vertices. Then there must be some non-pendent edge $e$ in the path from $x$ to $y$. Let $\mathfrak{T'}$ be the supertree obtained from $\mathfrak{T}$ by edge-releasing the non-pendent edge $e$ of $\mathfrak{T}$. Then by Theorem~\ref{thm-edgerelease} we have $\rho(\mathcal{A}(\mathfrak{T}))< \rho(\mathcal{A}(\mathfrak{T'}))$. On the other hand, we have $N_2(\mathfrak{T'})< N_2(\mathfrak{T})$. So by the inductive hypothesis we have $\rho(\mathcal{A}(\mathfrak{T'}))\leq \rho(\mathcal{A}(\mathcal{S}_{n,k}))$. Combining the above two relations we obtain $\rho(\mathcal{A}(\mathfrak{T}))< \rho(\mathcal{A}(\mathcal{S}_{n,k}))$.

Using the same arguments we can prove the second and the third inequalities.
\end{proof}

Next we determine the supertree with the second largest spectral radius (also for the three kinds of spectral radii).

Let $S(a,b)$ be the ordinary tree with $a+b+2$ vertices obtained from an edge $e$ by attaching $a$ pendent edges to one end vertex of $e$, and attaching $b$ pendent edges to the other end vertex of $e$. Let $S^k(a,b)$ be the $k$th power of $S(a,b)$. We have the following lemma for the comparison of the spectral radii of $S^k(a,b)$ and $S^k(c,d)$ when $a+b=c+d$.

\begin{lemma}\label{lem-second}Let $a,b,c,d$ be nonnegative integers with $a+b=c+d$. Suppose that $a\le b$, $c\le d$ and $a<c$, then we have:
$$\rho(\mathcal{A}(S^k(a,b)))>\rho(\mathcal{A}(S^k(c,d))),$$
and
$$\rho(\mathcal{Q}(S^k(a,b)))>\rho(\mathcal{Q}(S^k(c,d))),$$
and
$$\rho(\mathcal{Q^*}(S^k(a,b)))>\rho(\mathcal{Q^*}(S^k(c,d))).$$
\end{lemma}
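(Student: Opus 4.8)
The plan is to realize $S^k(a,b)$ as the result of applying the edge-moving operation to $S^k(c,d)$ and then invoke Theorem~\ref{thm-edgemoving}. Write $G=S^k(c,d)$ and let $p,q$ be the two images in $G$ of the end vertices of the central edge of $S(c,d)$, where $p$ lies on $c$ pendant (hyper)edges, $q$ lies on $d$ pendant edges, and $p,q$ together with $k-2$ further vertices form the central edge $e_0$. Since $a+b=c+d$ and $a<c$, moving the $r:=c-a\ge 1$ pendant edges at $p$ to $q$ (that is, taking $u=q$ and every $v_i=p$ in Definition~\ref{def-edgemoving}) replaces $G$ by a hypergraph in which $p$ lies on $a$ pendant edges and $q$ on $d+(c-a)=b$ pendant edges; this is exactly $S^k(a,b)$, and no multiple edges arise because each moved edge keeps its own distinct pendant vertices. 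By Theorem~\ref{thm-edgemoving} it therefore suffices to prove, for each of $\mathcal{A}(G),\mathcal{Q}(G),\mathcal{Q}^{*}(G)$ separately, that its principal eigenvector $x$ satisfies $x_q\ge x_p$; then $x_u\ge \max_i x_{v_i}$ holds and each spectral radius strictly increases, since $r\ge 1$.

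To establish $x_q\ge x_p$ I would exploit the symmetry of $G$. The $c$ pendant edges at $p$ are permuted among themselves, and the $k-1$ pendant vertices inside each of them are permuted, by automorphisms of $G$; since $G$ is connected the relevant tensors are weakly irreducible, so by the uniqueness clause of Theorem~\ref{thmperron} the positive principal eigenvector is constant on each automorphism orbit. Thus $x$ takes a single value $s$ on the pendant vertices attached at $p$, a single value $t$ on those attached at $q$, and a single value $w$ on the $k-2$ internal vertices of $e_0$. For $\mathcal{A}$, the eigenvalue equation at a pendant vertex of $p$ gives $x_p s^{k-2}=\rho s^{k-1}$, i.e.\ $s=x_p/\rho$ (with $\rho=\rho(\mathcal{A}(G))$), while the equation at $p$ reads $c\,s^{k-1}+x_q w^{k-2}=\rho x_p^{k-1}$; substituting $s=x_p/\rho$ yields $x_q w^{k-2}=x_p^{k-1}\bigl(\rho-c\rho^{-(k-1)}\bigr)$, and symmetrically $x_p w^{k-2}=x_q^{k-1}\bigl(\rho-d\rho^{-(k-1)}\bigr)$. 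Dividing gives $(x_q/x_p)^k=\bigl(\rho-c\rho^{-(k-1)}\bigr)/\bigl(\rho-d\rho^{-(k-1)}\bigr)\ge 1$ because $c\le d$, whence $x_q\ge x_p$.

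The same reduction handles $\mathcal{Q}$ and $\mathcal{Q}^{*}$. For $\mathcal{Q}$, the pendant-vertex equation gives $s=x_p/(\rho-1)$ and one is led to $(x_q/x_p)^k=g(c)/g(d)$, where $g(t)=(\rho-1-t)-t(\rho-1)^{-(k-1)}$ is decreasing in $t$. For $\mathcal{Q}^{*}$, writing $\mu=\rho^{1/(k-1)}$, the relations $x(e_i)=\mu s$ (pendant edges) and $x(e_0)=\mu w$, together with the equation \eqref{eeigenequcomp} at $p$ and at $q$, give $w^{k-1}=x_p^{k-1}\bigl(1-c(\mu-k+1)^{-(k-1)}\bigr)=x_q^{k-1}\bigl(1-d(\mu-k+1)^{-(k-1)}\bigr)$, hence $(x_q/x_p)^{k-1}=\bigl(1-c(\mu-k+1)^{-(k-1)}\bigr)/\bigl(1-d(\mu-k+1)^{-(k-1)}\bigr)$. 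In each case the numerator is at least the (positive) denominator precisely because $c\le d$, so again $x_q\ge x_p$, and the proof concludes as in the adjacency case.

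The per-vertex eigenvalue equations are routine; the genuinely delicate points are two. First, justifying that the principal eigenvector is constant on the symmetry orbits, which is exactly where the uniqueness part of Theorem~\ref{thmperron} (and hence weak irreducibility, i.e.\ connectedness of $G$) is essential. Second, verifying the positivity of the denominators $\rho-d\rho^{-(k-1)}$, $g(d)$, and $1-d(\mu-k+1)^{-(k-1)}$, so that the displayed ratios are legitimately $\ge 1$; this positivity is automatic because each denominator equals a manifestly positive quantity (such as $x_p w^{k-2}/x_q^{k-1}$) and the eigenvector is strictly positive. Once $x_q\ge x_p$ is secured for all three tensors, the strict inequalities follow at once from Theorem~\ref{thm-edgemoving}.
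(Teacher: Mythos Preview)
Your argument is correct, but it is not the route the paper takes. The paper sidesteps the eigenvector computation entirely by a ``two-way'' trick: from $S^k(c,d)$ one can reach $S^k(a,b)$ \emph{either} by moving $c-a$ pendent edges from $p$ to $q$ \emph{or} by moving $d-a$ pendent edges from $q$ to $p$ (both operations yield isomorphic copies of $S^k(a,b)$, since $a<c\le d$ ensures both counts are positive). Since trivially either $x_q\ge x_p$ or $x_p\ge x_q$, at least one of these two moves satisfies the hypothesis of Theorem~\ref{thm-edgemoving}, and that already gives $\rho(S^k(a,b))>\rho(S^k(c,d))$ without ever deciding which inequality between $x_p$ and $x_q$ actually holds. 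This works uniformly for $\mathcal{A}$, $\mathcal{Q}$, and $\mathcal{Q}^{*}$ with no case-by-case eigenvalue equations.

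Your approach, by contrast, pins down $x_q\ge x_p$ explicitly via the eigenvalue equations and the orbit structure of $\operatorname{Aut}(S^k(c,d))$. That is more work but yields more information (you actually prove the monotonicity of the Perron coordinate in the degree), and your handling of the delicate points---constancy on orbits via the uniqueness clause of Theorem~\ref{thmperron}, and positivity of the denominators by rewriting them as ratios of positive eigenvector entries---is sound. The paper's trick is quicker and more robust (it reappears in the proof of Theorem~\ref{thm-second} and in Theorem~\ref{thm-retotateedgeop}); your method is more informative but tensor-specific.
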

\begin{proof} Let $x,y$ be the (only) two non-pendent vertices of $S^k(c,d)$ with the degrees $d(x)=c+1$ and $d(y)=d+1$. Let $G'$ be obtained from $S^k(c,d)$ by moving $c-a$ pendent edges from $x$ to $y$, and $G''$ be obtained from $S^k(c,d)$ by moving $d-a$ pendent edges from $y$ to $x$. Then both $G'$ and $G''$ are isomorphic to $S^k(a,b)$.

On the other hand, it can be verified that at least one of $G'$ and $G''$ will satisfy the condition (1) of Theorem~\ref{thm-edgemoving}. So by Theorem~\ref{thm-edgemoving} we have
$$max (\rho(\mathcal{A}(G')),\rho(\mathcal{A}(G'')))>\rho(\mathcal{A}(S^k(c,d))).$$
Thus we have
$$\rho(\mathcal{A}(S^k(a,b)))=max (\rho(\mathcal{A}(G')),\rho(\mathcal{A}(G'')))>\rho(\mathcal{A}(S^k(c,d))).$$
The other two inequalities can be proved in exactly the same way.
\end{proof}

The following theorem shows that for all these three kinds of spectral radii, $S^k(1,n'-3)$
attains uniquely the second largest spectral radius among all $k$-uniform supertrees on $n$ vertices (where $n'=\frac {n-1}{k-1}+1$).

\begin{theorem}\label{thm-second}
Let $\mathfrak{T}$ be a  $k$-uniform   supertree on $n$ vertices (with $m=n'-1$ edges where $n'=\frac {n-1}{k-1}+1$). Suppose that $\mathfrak{T}\ne \mathcal{S}_{n,k}$, then we have
\begin{equation*}
\rho(\mathcal{A}(\mathfrak{T}))\leq \rho(\mathcal{A}(S^k(1,n'-3))),
\end{equation*}
and
\begin{equation*}
\rho(\mathcal{Q}(\mathfrak{T}))\leq \rho(\mathcal{Q}(S^k(1,n'-3))),
\end{equation*}
and
\begin{equation*}
\rho(\mathcal{Q}^{*}(\mathfrak{T}))\leq \rho(\mathcal{Q}^{*}(S^k(1,n'-3))),
\end{equation*}
with either one of the equalities holding if and only if $\mathfrak{T}\cong S^k(1,n'-3)$.
\end{theorem}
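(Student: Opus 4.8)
The plan is to run an induction on the number $N_2(\mathfrak{T})$ of non-pendent vertices of $\mathfrak{T}$, paralleling the proof of Theorem~\ref{thm-hyperstar1} but arresting the process one step before the hyperstar is reached. Throughout, write $m=n'-1$ for the number of edges and recall (Proposition~\ref{prop-edgenumber}) that $\mathfrak{T}$ is linear, so that edges meet in at most one vertex.

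First I would settle the base case $N_2(\mathfrak{T})=2$, where I claim $\mathfrak{T}\cong S^k(a,b)$ for some $a,b\ge 1$ with $a+b=n'-2$. Indeed, if $x,y$ are the two non-pendent vertices, the path joining them can contain no intermediate vertex (such a vertex would have degree $\ge 2$), so a single edge $e$ contains both $x$ and $y$; by linearity every remaining edge is a pendent edge meeting $e$ in exactly one of $x,y$, and the other vertices of $e$ are pendent. Thus $\mathfrak{T}$ is the $k$th power of $S(a,b)$, where $a,b\ge1$ count the pendent edges at $x$ and $y$. Lemma~\ref{lem-second} then gives $\rho(\mathfrak{T})\le \rho(S^k(1,n'-3))$ for each of the three spectral radii, with equality if and only if $\{a,b\}=\{1,n'-3\}$, that is, if and only if $\mathfrak{T}\cong S^k(1,n'-3)$.

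For the inductive step, suppose $N_2(\mathfrak{T})\ge 3$. The key computation I would record is that if $G'$ is obtained from $\mathfrak{T}$ by edge-releasing a non-pendent edge $e$ at a non-pendent vertex $u\in e$, then every vertex of $e$ other than $u$ becomes pendent while the degree of every vertex outside $e$ is unchanged (moving an edge alters only the degrees of its old attachment vertex and of $u$); hence $N_2(G')=N_2(\mathfrak{T})-(t-1)$, where $t\ge2$ is the number of non-pendent vertices lying on $e$. Consequently, to keep $G'$ off the hyperstar it suffices to release an edge $e$ that does \emph{not} contain all non-pendent vertices, for then $t\le N_2(\mathfrak{T})-1$ and so $2\le N_2(G')<N_2(\mathfrak{T})$. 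I would show such an $e$ exists whenever $\mathfrak{T}$ has at least two non-pendent edges: two distinct non-pendent edges each carry at least two non-pendent vertices and meet in at most one vertex, so neither can contain all $N_2(\mathfrak{T})\ge3$ of them. In this situation $G'$ is a non-hyperstar supertree on $n$ vertices with strictly fewer non-pendent vertices, Theorem~\ref{thm-edgerelease} gives $\rho(\mathfrak{T})<\rho(G')$ for all three radii, and the induction hypothesis yields $\rho(G')\le\rho(S^k(1,n'-3))$, whence the desired strict inequality.

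The main obstacle is the remaining configuration, in which $N_2(\mathfrak{T})\ge 3$ but $\mathfrak{T}$ has a \emph{single} non-pendent edge $e$: here releasing $e$ drives $N_2$ straight to $1$, producing the hyperstar, so the inductive mechanism fails and a direct argument is needed. In this case $e$ carries all the non-pendent vertices $w_1,\dots,w_{N_2}$, each adorned with some pendent edges. Taking $x$ to be the relevant principal eigenvector and relabelling so that $x_{w_1}=\max_i x_{w_i}$, I would invoke the edge-moving Theorem~\ref{thm-edgemoving} to move every pendent edge hanging at $w_3,\dots,w_{N_2}$ to $w_1$; the hypothesis $x_{w_1}\ge x_{w_j}$ holds, and no multiple edge appears because $e$ already contains all of $w_1,w_3,\dots,w_{N_2}$ (cf. Remark~(2) after Definition~\ref{def-edgemoving}). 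The result is exactly some $S^k(a,b)$ with $a,b\ge1$, since only $w_1,w_2$ remain non-pendent, and Theorem~\ref{thm-edgemoving} supplies a strict increase of each spectral radius. Combining this with Lemma~\ref{lem-second} gives $\rho(\mathfrak{T})<\rho(S^k(a,b))\le\rho(S^k(1,n'-3))$. Since every case with $N_2(\mathfrak{T})\ge3$ forces strict inequality, whereas $N_2(\mathfrak{T})=2$ yields equality precisely for $S^k(1,n'-3)$, the equality characterization follows.
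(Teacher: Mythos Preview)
Your proof is correct, and the base case is identical to the paper's. The inductive step, however, proceeds differently. The paper does not use edge-releasing here at all: given any two non-pendent vertices $x,y$ joined by a path $x,e_1,\dots,e_r,y$, it forms $\mathfrak{T}_1$ by moving every edge at $x$ except $e_1$ to $y$, and $\mathfrak{T}_2$ by moving every edge at $y$ except $e_r$ to $x$. Each is a supertree with $N_2$ reduced by exactly one (so still $\ge 2$), and since one of $x_v\ge x_y$ or $x_y\ge x_v$ holds for the relevant principal eigenvector, Theorem~\ref{thm-edgemoving} applies to at least one of $\mathfrak{T}_1,\mathfrak{T}_2$; induction finishes. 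This single move works uniformly, with no case analysis.

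Your route instead reuses the edge-releasing mechanism from Theorem~\ref{thm-hyperstar1}, which is natural but can overshoot directly to the hyperstar; you correctly isolate the obstruction (a unique non-pendent edge carrying all $N_2\ge 3$ non-pendent vertices) and dispatch it with a tailored edge-move to land on some $S^k(a,b)$. Both arguments are valid and of comparable length; the paper's is slightly more uniform, while yours makes more explicit use of the structural count $N_2(G')=N_2(\mathfrak{T})-(t-1)$ under edge-releasing, which is a nice observation in its own right.
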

\begin{proof} We use induction on the number of non-pendent vertices $N_2(\mathfrak{T})$. Since $\mathfrak{T}\ne \mathcal{S}_{n,k}$, we have $N_2(\mathfrak{T})\ge 2$. Now we assume that $\mathfrak{T}\ne S^k(1,n'-3)$.

If $N_2(\mathfrak{T})=2$, then the two non-pendent vertices (say, $x$ and $y$) of $\mathfrak{T}$ must be adjacent (otherwise, all the internal vertices of the path between $x$ and $y$ would be non-pendent vertices other than $x$ and $y$, contradicting $N_2(\mathfrak{T})=2$), and so it can be easily verified that $\mathfrak{T}= S^k(c,d)$ for some positive integers $2\le c\le d$ ($2\le c$ since $\mathfrak{T}\ne S^k(1,n'-3)$). So by Lemma~\ref{lem-second} we get the desired results.

If $N_2(\mathfrak{T})\ge 3$, let $x,y$ be two non-pendent vertices of $\mathfrak{T}$. Let $x,e_1,x_1,\cdots, e_r,y$ be a path from $x$ to $y$. Let $\mathfrak{T}_1$ be obtained from $\mathfrak{T}$ by moving all the edges incident with $x$ (except $e_1$) to $y$, and $\mathfrak{T}_2$ be obtained from $\mathfrak{T}$ by moving all the edges incident with $y$ (except $e_r$) to $x$. Then both $\mathfrak{T}_1$ and $\mathfrak{T}_2$ are still supertrees (since they are still connected, and have the same number of edges as $\mathfrak{T}$), and we have
$$2\le N_2(\mathfrak{T}_i)=N_2(\mathfrak{T})-1<N_2(\mathfrak{T}) \qquad (i=1,2).$$
So by induction and Theorem~\ref{thm-edgemoving} (since at least one of $\mathfrak{T}_1$ and $\mathfrak{T}_2$ will satisfy the condition (1) of Theorem~\ref{thm-edgemoving}) we have
$$\rho(\mathcal{A}(\mathfrak{T}))<max (\rho(\mathcal{A}(\mathfrak{T}_1)),\rho(\mathcal{A}(\mathfrak{T}_2)))\leq \rho(\mathcal{A}(S^k(1,n'-3)))$$
Using the same arguments we can prove the second and the third inequalities.
\end{proof}

Next we consider the minimal problems for these three kinds of spectral radii. By introducing the operation of $total$ $grafting$ and studying the perturbation of the spectral radii under this operation, we are able to determine that the loose path $\mathcal{P}_{n,k}$ attains uniquely the minimum spectral radius among all $k$-th power hypertrees on $n$ vertices.

A path $P=(v_0,e_1,v_1,\cdots,v_{p-1},e_p,v_p)$ in a $k$-uniform hypergraph $H$ is called a $pendent$ $path$ (starting from $v_0$), if all the vertices $v_1, \cdots, v_{p-1}$ are of degree two, the vertex $v_p$ is of degree one, and all the $k-2$ vertices in the set $e_i\setminus \{v_{i-1},v_i\}$ are of degree one in $H$ ($i=1,\cdots,p$).

\begin{definition}\label{def-grafting}
Let  $G$ be  a connected  $k$-uniform linear hypergraph and $v$ be a vertex of $G$. Let $G(v;p,q)$ be a  $k$-uniform linear hypergraph obtained from $G$ by adding two pendent paths $P=(v,e_1,v_1,\cdots,v_{p-1},e_p,v_p)$ and $Q=(v,e_1',u_1,\cdots,u_{q-1},e_q',u_q)$  at $v$, where $V(P)\cap V(Q)=\{v\}$. Then we say that $G(v;p+q,0)$ is obtained from $G(v;p,q)$ by a $total$ $grafting$ operation at $v$.
\end{definition}

\begin{proposition}\label{prop-grafting}
Let $G(v;p+q,0)$ be the $k$-uniform linear hypergraph obtained from $G$ by adding a pendent path $P=(v,e_1,v_1,\ldots,v_{p-1},e_p,v_p,e_{p+1},\cdots, e_{p+q},v_{p+q})$ at $v$. Let $G_1$ be the hypergraph obtained from $G(v;p+q,0)$ by moving the edge $e_{p+1}$ from $v_p$ to $v$, and let $G_2$ be the hypergraph obtained from $G(v;p+q,0)$ by moving all edges incident to $v$ (except $e_1$) from $v$ to $v_p$. Then both $G_1$ and $G_2$ are isomorphic to $G(v;p,q)$.
\end{proposition}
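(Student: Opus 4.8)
The plan is to verify each isomorphism directly: I write out the edge set produced by the prescribed edge-moving operation (Definition~\ref{def-edgemoving}) and exhibit an explicit vertex bijection carrying it onto the edge set of $G(v;p,q)$. Throughout I regard $G(v;p+q,0)$ as $G$ together with the single pendent path $P=(v,e_1,v_1,\ldots,e_p,v_p,e_{p+1},\ldots,e_{p+q},v_{p+q})$; by the definition of a pendent path, the only edges of $G(v;p+q,0)$ meeting an internal path vertex $v_j$ $(1\le j\le p+q-1)$ are the two consecutive path edges $e_j,e_{j+1}$, the path vertices $v_1,\ldots,v_{p+q}$ lie outside $V(G)$, and the edges incident to $v$ are $e_1$ together with the $d_v(G)$ edges of $G$ containing $v$. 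I assume $p,q\ge 1$, as required for $G(v;p,q)$ to have two genuine pendent arms.

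For $G_1$, I would first record that moving $e_{p+1}$ from $v_p$ to $v$ replaces $e_{p+1}$ by $(e_{p+1}\setminus\{v_p\})\cup\{v\}$ and leaves every other edge unchanged; this is a legitimate move since $v\notin e_{p+1}$ (the vertex $v$ lies only on $e_1$ among the path edges, using $p\ge 1$). After the move the subpath $v,e_1,v_1,\ldots,e_p,v_p$ is untouched while $v_p$ loses its second incidence, so it becomes a pendent path of length $p$ ending at $v_p$; the edges $(e_{p+1}\setminus\{v_p\})\cup\{v\},e_{p+2},\ldots,e_{p+q}$ now form a second pendent path of length $q$ starting at $v$. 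These two paths share only $v$ and $G$ is still attached at $v$, so the identity map on vertices shows $G_1\cong G(v;p,q)$.

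For $G_2$, the edges incident to $v$ other than $e_1$ are precisely the edges $f$ of $G$ with $v\in f$, and each is moved from $v$ to $v_p$, hence replaced by $(f\setminus\{v\})\cup\{v_p\}$; this is legitimate because $v_p\notin f$ (as $v_p\notin V(G)$) and $v_p\ne v$. After the move $v$ keeps only the incidence $e_1$, so $d_v(G_2)=1$, whereas $v_p$ carries a relocated copy of $G$ in addition to $e_p,e_{p+1}$, so $d_{v_p}(G_2)=d_v(G)+2$. Reading the two arms emanating from $v_p$, namely $v_p,e_p,v_{p-1},\ldots,e_1,v$ and $v_p,e_{p+1},v_{p+1},\ldots,e_{p+q},v_{p+q}$, one sees a copy of $G$ attached at $v_p$ with two pendent paths of lengths $p$ and $q$. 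The desired isomorphism sends $v_p\mapsto v$, carries the relocated $G$-edges back to the original $G$-edges via this relabeling, and matches the two arms with $P'$ and $Q'$; this is a bijection of edge sets, giving $G_2\cong G(v;p,q)$.

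The bookkeeping is otherwise routine, so the only step needing care is the $G_2$ case, in which the operation simultaneously relocates the base hypergraph $G$ from $v$ to $v_p$ and traverses the first arm in reverse. I would make sure the relabeling $v_p\leftrightarrow v$ is applied consistently to both the moved $G$-edges and the reversed path, so that the degree bookkeeping matches on both sides; in particular $d_{v_p}(G_2)=d_v(G)+2=d_v(G(v;p,q))$ and $d_v(G_2)=1=d_{v_p}(G(v;p,q))$, confirming that the branch vertex and the arm endpoints correspond correctly.
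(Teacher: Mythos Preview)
Your argument is correct and takes the same approach as the paper: direct verification of the two isomorphisms. The paper's own proof consists solely of the sentence ``The proof of this result is obvious,'' so your write-up is in fact considerably more detailed than what the authors provide, but the underlying idea is identical.
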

\begin{proof}
The proof of this result is obvious.
\end{proof}

\begin{theorem}\label{thm-retotateedgeop}
Let $G(v;p,q)$ and $G(v;p+q,0)$ be defined as above (where $G$ is connected). If both $p$ and $q$ are not zero, then
\begin{equation*}
\rho(\mathcal{A}(G(v;p,q)))> \rho(\mathcal{A}(G(v;p+q,0))).
\end{equation*}
and
\begin{equation*}
\rho(\mathcal{Q}(G(v;p,q)))> \rho(\mathcal{Q}(G(v;p+q,0))).
\end{equation*}
and
\begin{equation*}
\rho(\mathcal{Q}^{*}(G(v;p,q)))> \rho(\mathcal{Q}^{*}(G(v;p+q,0))).
\end{equation*}
\end{theorem}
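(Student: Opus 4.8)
The plan is to realize $G(v;p,q)$ as the result of an edge-moving operation applied to $H := G(v;p+q,0)$ in \emph{two} different ways, and then, depending on the relative sizes of the principal eigenvector entries at the two relevant vertices, to invoke Theorem~\ref{thm-edgemoving} with whichever realization satisfies its monotonicity hypothesis. Concretely, view $H$ as carrying the long pendent path $(v,e_1,v_1,\ldots,e_p,v_p,e_{p+1},\ldots,e_{p+q},v_{p+q})$ as in Proposition~\ref{prop-grafting}. That proposition supplies two hypergraphs, both isomorphic to $G(v;p,q)$: the hypergraph $G_1$ obtained from $H$ by moving the single edge $e_{p+1}$ from $v_p$ to $v$, and the hypergraph $G_2$ obtained from $H$ by moving every edge incident with $v$ except $e_1$ from $v$ to $v_p$. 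Since $G_1\cong G_2\cong G(v;p,q)$ is a simple $k$-uniform linear hypergraph, neither $G_1$ nor $G_2$ contains multiple edges, so Theorem~\ref{thm-edgemoving} applies to each. The key observation is that $G_1$ moves an edge \emph{towards} $v$ (destination $u=v$, single source $v_p$), whereas $G_2$ moves edges \emph{towards} $v_p$ (destination $u=v_p$, all sources equal to $v$); these two moves impose opposite eigenvector inequalities, and exactly one of them must hold.

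For the adjacency spectral radius, I would let $x$ be the principal eigenvector of $\mathcal{A}(H)$. If $x_v\ge x_{v_p}$, then the move producing $G_1$ has destination $u=v$ and its single source vertex $v_p$ satisfies $x_u=x_v\ge x_{v_p}$, so Theorem~\ref{thm-edgemoving}(1) gives $\rho(\mathcal{A}(G_1))>\rho(\mathcal{A}(H))$. If instead $x_{v_p}\ge x_v$, then the move producing $G_2$ has destination $u=v_p$ and every source vertex equal to $v$, whence $x_u=x_{v_p}\ge x_v=\max_i\{x_{v_i}\}$ and Theorem~\ref{thm-edgemoving}(1) gives $\rho(\mathcal{A}(G_2))>\rho(\mathcal{A}(H))$. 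In either case, using $G_1\cong G_2\cong G(v;p,q)$, I conclude $\rho(\mathcal{A}(G(v;p,q)))>\rho(\mathcal{A}(G(v;p+q,0)))$. The single dichotomy $x_v\ge x_{v_p}$ versus $x_v\le x_{v_p}$ covers all possibilities, which is precisely why having the two realizations $G_1$ and $G_2$ is essential.

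The signless Laplacian and incidence $Q$ inequalities follow by the identical argument, now taking $x$ to be the principal eigenvector of $\mathcal{Q}(H)$ (respectively $\mathcal{Q}^{*}(H)$) and invoking part (2) (respectively part (3)) of Theorem~\ref{thm-edgemoving} in place of part (1); the same comparison of $x_v$ with $x_{v_p}$ selects the appropriate realization $G_1$ or $G_2$. I do not expect a genuine obstacle here, since the substantive content is already packaged in Theorem~\ref{thm-edgemoving} and Proposition~\ref{prop-grafting}. The only point requiring care is the bookkeeping of the two moves: correctly identifying the destination vertex $u$ and the source vertices $v_i$ in each realization, and checking the standing requirement $u\notin e$ for every moved edge $e$ (which holds because $v\notin e_{p+1}$ when $p\ge 1$, and because $v_p$, being an internal vertex of the pendent path, lies in no edge of $G$). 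This bookkeeping, combined with the observation that the two realizations force opposite eigenvector inequalities so that one always applies, is the crux of the proof.
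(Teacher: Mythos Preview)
Your proposal is correct and follows essentially the same approach as the paper: both construct the two realizations $G_1$ and $G_2$ of $G(v;p,q)$ via Proposition~\ref{prop-grafting}, compare the principal eigenvector entries $x_v$ and $x_{v_p}$ of $H=G(v;p+q,0)$, and apply Theorem~\ref{thm-edgemoving} to whichever realization satisfies its hypothesis. Your write-up is in fact slightly more explicit about the bookkeeping (verifying $u\notin e$ for each moved edge and the absence of multiple edges) than the paper's own proof.
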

\begin{proof} Let $G_1$ be the hypergraph obtained from $G(v;p+q,0)$ by moving the edge $e_{p+1}$ from $v_p$ to $v$, and let $G_2$ be the hypergraph obtained from $G(v;p+q,0)$ by moving all edges incident to $v$ (except $e_1$) from $v$ to $v_p$. Then both $G_1$ and $G_2$ are isomorphic to $G(v;p,q)$.  Since $G$ is connected, $G(v;p+q,0)$ is also connected and so we can assume that  $x$ is the principal eigenvector of $\mathcal{A}(G(v;p+q,0))$ corresponding to $\rho(\mathcal{A}(G(v;p+q,0)))$.  Consider the components $x_v, x_{v_p}$  of $x$ corresponding to $v$ and $v_p$. Obviously, either $x_v\geq x_{v_p}$ or $x_v\leq x_{v_p}$. Thus by Theorem~\ref{thm-edgemoving}, we have
$$\rho(\mathcal{A}(G(v;p,q)))=max \{\rho(\mathcal{A}(G_1)), \rho(\mathcal{A}(G_2)) \}> \rho(\mathcal{A}(G(v;p+q,0))),$$
Similarly we can show that
$$\rho(\mathcal{Q}(G(v;p,q)))=max \{\rho(\mathcal{Q}(G_1)), \rho(\mathcal{Q}(G_2)) \}> \rho(\mathcal{Q}(G(v;p+q,0))).$$
and
$$\rho(\mathcal{Q}^{*}(G(v;p,q)))=max \{\rho(\mathcal{Q}^{*}(G_1)), \rho(\mathcal{Q}^{*}(G_2)) \}> \rho(\mathcal{Q}^{*}(G(v;p+q,0))).$$
\end{proof}

The following lemma is about the total grafting operation on ordinary trees.
\begin{lemma}\label{lem-grafting}
Let $T$  be an ordinary tree of order $n$ which is not a path. Then the path $P_n$ can be obtained from $T$ by several times of total grafting operations.
\end{lemma}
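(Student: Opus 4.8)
The plan is to induct on the number $b(T)$ of \emph{branch vertices} of $T$, that is, vertices of degree at least $3$; recall that an ordinary tree is a path precisely when $b(T)=0$. Since the total grafting operation of Definition~\ref{def-grafting} (specialized to $k=2$) merges two pendent paths meeting at a common vertex $v$ into a single pendent path and leaves the total number of vertices unchanged, it suffices to show that, whenever $b(T)\ge 1$, a finite succession of total grafting steps strictly decreases $b$ while keeping the object an ordinary tree on $n$ vertices. Iterating then terminates at $b=0$, i.e.\ at a path, which must be the unique path $P_n$ because every operation preserves the vertex count.

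For the inductive step I would first locate a branch vertex at which grafting is clearly available. Root $T$ at an arbitrary vertex $r$ and choose a branch vertex $v$ of maximum depth. By maximality no branch vertex lies strictly below $v$, so every subtree hanging below a child of $v$ is a tree without branch vertices, hence a path; consequently each downward branch at $v$ is a pendent path in the sense demanded by Definition~\ref{def-grafting}, and the only branch at $v$ that need not be a pendent path is the one directed toward $r$. Since $\deg(v)\ge 3$, the vertex $v$ has at least two children and therefore at least two pendent paths $P,Q$ with $V(P)\cap V(Q)=\{v\}$.

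Next I would graft repeatedly at $v$. As long as $\deg(v)\ge 3$, at least two of the branches at $v$ are pendent paths (the only possible exception being the branch toward $r$), so a total grafting at $v$ is available; it replaces two pendent paths by the single pendent path obtained from their union, lowers $\deg(v)$ by exactly $1$, and raises no other vertex's degree, in particular creating no new branch vertex and leaving the branch toward $r$ untouched. Thus the invariant ``at most one non-pendent-path branch at $v$'' is preserved throughout, and after finitely many graftings $\deg(v)$ is brought down to $2$. At that moment $v$ is no longer a branch vertex, while every other vertex has kept its degree, so $b$ has dropped by exactly one and the result is still a tree on $n$ vertices.

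The main point requiring care is exactly this degree accounting: that a deepest branch vertex always carries at least two pendent paths for as long as its degree stays $\ge 3$, and that grafting never manufactures a new vertex of degree $\ge 3$, so that $b(T)$ is a legitimate strictly decreasing induction parameter. Granting this, the induction hypothesis applies to the reduced tree, which can be turned into $P_n$ by further total graftings; prepending the graftings just performed at $v$ then exhibits $P_n$ as obtained from $T$ by a finite sequence of total grafting operations, completing the proof.
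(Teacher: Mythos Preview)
Your proof is correct and follows essentially the same approach as the paper's: both induct on the number of vertices of degree at least $3$, locate an ``extremal'' branch vertex (you take one of maximum depth after rooting at $r$; the paper takes one furthest from an arbitrarily chosen vertex $v$, which is the same thing), observe that all but possibly one of its incident branches are pendent paths, and then perform $\deg(u)-2$ total graftings there to drop the count of branch vertices by one. Your write-up is a bit more explicit about the degree accounting and the invariant that grafting creates no new branch vertex, but the argument is the same.
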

\begin{proof}
Let $N_{3}(T)$ be the number of vertices in $T$ with degree at least 3. Then $T\ne P_n \Longleftrightarrow N_{3}(T)\ge 1$. We then use induction on $N_{3}(T)$.

Let $v$ be a vertex of $T$, let $u$ be a vertex with degree at least 3 which is furthest to $v$ (since $N_{3}(T)\ge 1$). Then there are at least $(d(u)-1)$ many pendant paths starting from
$u$. By using $(d(u)-2)$ many total grafting operations at $u$ on these pendant paths, we finally obtain a tree $T^{'}$ of order $n$ with
$N_{3}(T^{'})=N_{3}(T)-1$ (since the vertex $u$ has degree 2 in the new tree $T^{'}$). By using induction on the tree $T^{'}$, we arrive our desired result.
\end{proof}

\begin{theorem}\label{thm-kpowertree}
Let  $T^k$ be the $k$th power of an ordinary tree $T$, defined as in \cite{HuQishao13}. Suppose that $T^k$ has $n$ vertices. Then we have
\begin{equation*}
\rho(\mathcal{A}(\mathcal{P}_{n,k}))\leq \rho(\mathcal{A}(T^k))\leq \rho(\mathcal{A}(\mathcal{S}_{n,k}))
\end{equation*}
and
\begin{equation*}
\rho(\mathcal{Q}(\mathcal{P}_{n,k}))\leq \rho(\mathcal{Q}(T^k))\leq \rho(\mathcal{Q}(\mathcal{S}_{n,k}))
\end{equation*}
and
\begin{equation*}
\rho(\mathcal{Q}^{*}(\mathcal{P}_{n,k}))\leq \rho(\mathcal{Q}^{*}(T^k))\leq \rho(\mathcal{Q}^{*}(\mathcal{S}_{n,k}))
\end{equation*}
where either one of the left equalities holds if and only if  $T^k\cong \mathcal{P}_{n,k}$, and either one of the right equalities holds if and only if  $T^k\cong \mathcal{S}_{n,k}$.
\end{theorem}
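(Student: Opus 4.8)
The plan is to handle the two chains of inequalities by entirely different means. The right-hand inequalities are immediate: since $T^k$ is a supertree (as noted in the introduction, the $k$th power of an ordinary tree is a supertree in the present sense), Theorem~\ref{thm-hyperstar1} applies directly and yields $\rho(\mathcal{A}(T^k)) \le \rho(\mathcal{A}(\mathcal{S}_{n,k}))$ together with the two analogous statements for $\mathcal{Q}$ and $\mathcal{Q}^*$, with equality if and only if $T^k \cong \mathcal{S}_{n,k}$. Nothing further is needed for the upper bounds.

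For the left-hand inequalities, the first point to record is that the loose path $\mathcal{P}_{n,k}$ is precisely the $k$th power of the ordinary path, that is, $\mathcal{P}_{n,k} = P_{n'}^k$ with $n' = \frac{n-1}{k-1}+1$; this is immediate from the definition of the loose path and of the power construction. Hence if $T = P_{n'}$ then $T^k = \mathcal{P}_{n,k}$ and the left equality holds trivially. So assume $T$ is not a path. The strategy is to transport the total grafting reduction on ordinary trees (Lemma~\ref{lem-grafting}) up to the level of $k$th powers, where each grafting step strictly lowers all three spectral radii via Theorem~\ref{thm-retotateedgeop}.

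Concretely, I would first establish the compatibility of the $k$th power with total grafting: if $T'$ is obtained from $T$ by a single total grafting operation at a vertex $w$ (merging two pendent paths of $T$ meeting at $w$ into one), then $(T')^k$ is obtained from $T^k$ by a total grafting operation at $w$ in the sense of Definition~\ref{def-grafting}. The verification amounts to checking that an ordinary pendent path of $T$ lifts to a hypergraph pendent path of $T^k$: the degree-two internal vertices of the ordinary path remain degree two in $T^k$, the $k-2$ new vertices inserted into each edge by the power construction are pendent in $T^k$, and the endpoint remains of degree one. Taking $G$ to be the $k$th power of the subtree of $T$ obtained by deleting the two pendent paths (but retaining $w$), one sees $T^k = G(w;p,q)$ and $(T')^k = G(w;p+q,0)$ exactly, so Theorem~\ref{thm-retotateedgeop} gives $\rho(T^k) > \rho((T')^k)$ for each of $\mathcal{A}$, $\mathcal{Q}$, $\mathcal{Q}^*$.

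Finally I would chain these steps. By Lemma~\ref{lem-grafting}, since $T$ is not a path there is a finite sequence $T = T_0, T_1, \ldots, T_s = P_{n'}$ with $s \ge 1$ in which each $T_{i+1}$ arises from $T_i$ by a total grafting operation. Applying the compatibility and Theorem~\ref{thm-retotateedgeop} at each step yields the strict chain $\rho(T^k) = \rho(T_0^k) > \rho(T_1^k) > \cdots > \rho(T_s^k) = \rho(\mathcal{P}_{n,k})$ for each kind of spectral radius. Thus the left inequality is strict whenever $T$ is not a path; contrapositively, $T^k \cong \mathcal{P}_{n,k}$ forces $T$ to be a path, which settles the equality case without a separate injectivity argument. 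The main obstacle is the compatibility statement of the previous paragraph: one must check carefully that the vertex $w$ and the two pendent paths selected by the tree-level grafting genuinely induce a hypergraph-level grafting of the exact form demanded by Definition~\ref{def-grafting} on a common ambient hypergraph $G$, and that no multiple edges are created. Everything else is a routine application of results already proved.
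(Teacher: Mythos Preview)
Your proposal is correct and follows essentially the same route as the paper: the right-hand inequalities come directly from Theorem~\ref{thm-hyperstar1}, and the left-hand inequalities are obtained by applying Lemma~\ref{lem-grafting} at the ordinary-tree level and then lifting each total grafting step to the $k$th power so that Theorem~\ref{thm-retotateedgeop} yields a strict decrease. You are in fact more explicit than the paper about the compatibility of the power construction with total grafting (the paper simply asserts ``Accordingly, $\mathcal{P}_{n,k}$ can be obtained from $T^k$ by several times of total grafting operations''), so the ``main obstacle'' you flag is precisely the step the paper leaves implicit.
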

\begin{proof}
If $T^k\ne \mathcal{P}_{n,k}$, then $T$ is a tree of order $n'=\frac{n-1}{k-1}+1$ which is not a path. By Lemma~\ref{lem-grafting}, $P_{n'}$ can be obtained from $T$ by several times of total grafting operations. Accordingly, $\mathcal{P}_{n,k}$ can be obtained from $T^k$ by several times of total grafting operations. So by Theorem~~\ref{thm-retotateedgeop}, we have $\rho(\mathcal{A}(\mathcal{P}_{n,k}))< \rho(\mathcal{A}(T^k))$, and $\rho(\mathcal{Q}(\mathcal{P}_{n,k}))< \rho(\mathcal{Q}(T^k))$,  and $\rho(\mathcal{Q}^{*}(\mathcal{P}_{n,k}))< \rho(\mathcal{Q}^{*}(T^k))$.

Since $T^k$ is a supertree, the right inequalities follow immediately as a special case of Theorem~\ref{thm-hyperstar1}.
\end{proof}

It is proved in Theorem 4.1 of \cite{HuQiXie} that
\begin{equation*}
\rho(\mathcal{Q}(\mathcal{S}_{n,k}))= 1+\alpha^*,
\end{equation*}
where $\alpha^*\in (m-1, m]$ is the largest real root of $x^k-(m-1)x^{k-1}-m=0$, and $m=\frac{n-1}{k-1}$ is the number of edges of $\mathcal{S}_{n,k}$.

Now we compute the value of $\rho(\mathcal{Q}^{*}(\mathcal{S}_{n,k}))$.

\vskip 0.2cm

 Recall that an automorphism of a $k$-uniform hypergraph $G$
is a permutation $\sigma$ of $V(G)$ such that $\{i_1,i_2,\ldots,i_k\}\in E(G)$
 if and only if $\{\sigma(i_1),\sigma(i_2),\ldots,\sigma(i_k)\}\in E(G)$, for any $i_j\in V(G)$, $j=1,\ldots, k$. The group of all automorphisms of $G$ is denoted by $Aut(G)$.

 In \cite{Shao-tensorproduct}, Shao introduced the concept of permutational similarity for tensors as follows: for two order $k$ and
 dimension $n$ tensors $\mathcal{A}$ and  $\mathcal{B}$, if there exists a permutation matrix $P=P_{\sigma}$ (corresponding to a permutation $\sigma\in S_n$) such that  $\mathcal{B}=P\mathcal{A}P^T$, then
 $\mathcal{A}$ and  $\mathcal{B}$ are called permutational similar. Note that if $\mathcal{B}=P\mathcal{A}P^T$, then $b_{i_1,\ldots,i_k}=a_{\sigma(i_1),\sigma(i_2),\ldots,\sigma(i_k)}$.
 Shao\cite{Shao-tensorproduct} showed that similar tensors have the same characteristic polynomials and thus have the same spectra.

\begin{proposition}
A permutation $\sigma \in S_n$ is an automorphism of a $k$-uniform hypergraph $G$ on $n$ vertices if and only if  $P_{\sigma}\mathcal{Q}^{*}=\mathcal{Q}^{*}P_{\sigma}$.
\end{proposition}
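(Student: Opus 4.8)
The plan is to translate the tensor identity $P_{\sigma}\mathcal{Q}^{*}=\mathcal{Q}^{*}P_{\sigma}$ into a purely entrywise statement about $\mathcal{Q}^{*}$ and then match it against the definition of an automorphism, using the combinatorial meaning of the entries recorded in \eqref{eQentry}: $\mathcal{Q}^{*}_{i_1\cdots i_k}$ is the number of edges $e$ of $G$ with $i_1,\ldots,i_k\in e$. First I would record that $P_{\sigma}$ is orthogonal, $P_{\sigma}P_{\sigma}^{T}=P_{\sigma}^{T}P_{\sigma}=I$, that the general tensor product of Definition~\ref{d7} is associative (\cite{Shao-tensorproduct}), and that the identity matrix acts neutrally on the right. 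Right-multiplying $P_{\sigma}\mathcal{Q}^{*}=\mathcal{Q}^{*}P_{\sigma}$ by $P_{\sigma}^{T}$ and regrouping gives $P_{\sigma}\mathcal{Q}^{*}P_{\sigma}^{T}=\mathcal{Q}^{*}(P_{\sigma}P_{\sigma}^{T})=\mathcal{Q}^{*}$; conversely, right-multiplying $P_{\sigma}\mathcal{Q}^{*}P_{\sigma}^{T}=\mathcal{Q}^{*}$ by $P_{\sigma}$ and regrouping recovers the commuting relation. Hence $P_{\sigma}\mathcal{Q}^{*}=\mathcal{Q}^{*}P_{\sigma}$ is equivalent to the permutational-similarity fixed-point condition $P_{\sigma}\mathcal{Q}^{*}P_{\sigma}^{T}=\mathcal{Q}^{*}$.

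Next, by the formula $b_{i_1\cdots i_k}=a_{\sigma(i_1),\ldots,\sigma(i_k)}$ for $\mathcal{B}=P_{\sigma}\mathcal{A}P_{\sigma}^{T}$ recalled above, the condition $P_{\sigma}\mathcal{Q}^{*}P_{\sigma}^{T}=\mathcal{Q}^{*}$ is exactly the entrywise invariance
\[
\mathcal{Q}^{*}_{\sigma(i_1)\cdots\sigma(i_k)}=\mathcal{Q}^{*}_{i_1\cdots i_k}\qquad\text{for all } i_1,\ldots,i_k\in[n].
\]
It then remains to show this invariance holds if and only if $\sigma$ is an automorphism of $G$. For the forward implication I would use that an automorphism induces a bijection $e\mapsto\sigma(e)$ of $E(G)$ and that $\{i_1,\ldots,i_k\}\subseteq e$ if and only if $\{\sigma(i_1),\ldots,\sigma(i_k)\}\subseteq\sigma(e)$; this puts the edges counted by $\mathcal{Q}^{*}_{i_1\cdots i_k}$ in bijection with those counted by $\mathcal{Q}^{*}_{\sigma(i_1)\cdots\sigma(i_k)}$, so the two counts agree for every index tuple, including tuples with repeated entries.

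For the reverse implication I would specialise the invariance to $k$ pairwise distinct indices. Here is the only place where the uniformity and simplicity of $G$ genuinely enter: since every edge has exactly $k$ vertices and $G$ is simple, for distinct $i_1,\ldots,i_k$ the only possible edge containing all of them is $\{i_1,\ldots,i_k\}$ itself, so $\mathcal{Q}^{*}_{i_1\cdots i_k}=\mathbf{1}[\{i_1,\ldots,i_k\}\in E(G)]$. Because $\sigma$ is a bijection, $\sigma(i_1),\ldots,\sigma(i_k)$ are again distinct, so the invariance reads $\mathbf{1}[\{\sigma(i_1),\ldots,\sigma(i_k)\}\in E(G)]=\mathbf{1}[\{i_1,\ldots,i_k\}\in E(G)]$, which is precisely the defining condition of an automorphism. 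I expect this reverse direction to be the one delicate step, as it is where the edge-counting entry must be reduced to an edge indicator; the forward direction and the reduction to $P_{\sigma}\mathcal{Q}^{*}P_{\sigma}^{T}=\mathcal{Q}^{*}$ are routine.
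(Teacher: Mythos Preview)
Your proof is correct and follows essentially the same route as the paper: reduce $P_{\sigma}\mathcal{Q}^{*}=\mathcal{Q}^{*}P_{\sigma}$ to the permutational-similarity fixed-point condition $P_{\sigma}\mathcal{Q}^{*}P_{\sigma}^{T}=\mathcal{Q}^{*}$, rewrite that as entrywise invariance $\mathcal{Q}^{*}_{\sigma(i_1)\cdots\sigma(i_k)}=\mathcal{Q}^{*}_{i_1\cdots i_k}$, and identify this with the automorphism condition. The paper simply asserts the last equivalence ``by the definition of automorphism'', whereas you supply the argument (edge bijection for one direction, specialising to distinct indices and using $k$-uniformity and simplicity for the other); your version is thus a more complete rendering of the same proof.
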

\begin{proof}
Let $P=P_{\sigma}$ be the permutation matrix corresponding to $\sigma$, and $\mathcal{Q}'=P\mathcal{Q}^{*}P^T$. Then we have
$$\mathcal{Q}'_{i_1,\cdots,i_k}=\mathcal{Q}^{*}_{\sigma(i_1),\sigma(i_2),\cdots,\sigma(i_k)}.$$
So by the definition of automorphism and the associative law of the tensor product we have
\begin{align*}
\sigma \in Aut(G)&\Longleftrightarrow
\mathcal{Q}^{*}_{i_1,\cdots,i_k}=\mathcal{Q}^{*}_{\sigma(i_1),\sigma(i_2),\cdots,\sigma(i_k)}=\mathcal{Q}'_{i_1,\cdots,i_k}
\quad (\forall i_1,\cdots,i_k \in [n])\\
&\Longleftrightarrow \mathcal{Q}^{*}=\mathcal{Q}'=P\mathcal{Q}^{*}P^T\\
&\Longleftrightarrow P\mathcal{Q}^{*}=\mathcal{Q}^{*}P
\end{align*}
\end{proof}

If $x$ is an eigenvector of $\mathcal{Q}^{*}$ corresponding to the eigenvalue $\lambda$, then for each automorphism $\sigma$ of $G$ we have
\[
\mathcal{Q}^{*}P_{\sigma}x=P_{\sigma}\mathcal{Q}^{*}x=\lambda P_{\sigma}x^{[k-1]}=\lambda (P_{\sigma}x)^{[k-1]}.
\]
Thus $P_{\sigma}x$ is also an eigenvector of $\mathcal{Q}^{*}$ corresponding to the eigenvalue $\lambda$. This simple observation leads to what follows.

\begin{lemma}\label{lem-eigenvectorcomponent}
Let $G$ be a connected $k$-uniform hypergraph, $\mathcal{Q}^{*}=\mathcal{Q}^{*}(G)$ be its (irreducible) incidence $Q$-tensor. If $x$ is the principal eigenvector of $\mathcal{Q}^{*}$ corresponding to $\lambda=\rho (\mathcal{Q}^{*})$, then we have:

\noindent(1). $P_{\sigma}x= x$ for each automorphism $\sigma$ of $G$.

\noindent(2). For any orbit $\Omega$ of $Aut(G)$ and each pair of vertices $i,j\in \Omega$, the corresponding components $x_i,x_j$ of $x$  are equal.
\end{lemma}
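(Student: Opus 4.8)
The plan is to derive both statements from the uniqueness part of the Perron--Frobenius theorem (Theorem~\ref{thmperron}) for weakly irreducible nonnegative tensors, combined with the observation recorded immediately before the lemma that $P_{\sigma}x$ is again an eigenvector of $\mathcal{Q}^{*}$ for the same eigenvalue $\lambda$. The crucial structural input is that connectivity of $G$ forces the nonnegative tensor $\mathcal{Q}^{*}$ to be irreducible (Lemma~\ref{lem-irred}), hence weakly irreducible; therefore the nonnegative eigenvector attached to $\rho(\mathcal{Q}^{*})$ is unique up to a positive scalar, and the (unique) positive, unit $k$-norm such eigenvector is exactly the principal eigenvector $x$.

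First I would establish (1). Fix $\sigma\in Aut(G)$. By the displayed computation preceding the lemma, $\mathcal{Q}^{*}(P_{\sigma}x)=\lambda(P_{\sigma}x)^{[k-1]}$, so $P_{\sigma}x$ is an eigenvector of $\mathcal{Q}^{*}$ for $\lambda=\rho(\mathcal{Q}^{*})$. Since $P_{\sigma}$ merely permutes the coordinates of the positive vector $x$, the vector $P_{\sigma}x$ is again positive and satisfies $\|P_{\sigma}x\|_{k}=\|x\|_{k}=1$. Thus $P_{\sigma}x$ is a positive (in particular nonnegative) eigenvector for $\rho(\mathcal{Q}^{*})$ of unit $k$-norm. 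Theorem~\ref{thmperron} then guarantees $P_{\sigma}x=c\,x$ for some scalar $c$; comparing $k$-norms gives $|c|=1$, and positivity of both vectors forces $c=1$, i.e.\ $P_{\sigma}x=x$.

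Then (2) is an immediate corollary. With the permutation-matrix convention fixed in the proposition preceding the lemma (so that $(P_{\sigma}x)_{i}=x_{\sigma(i)}$), statement (1) reads $x_{\sigma(i)}=x_{i}$ for every $i\in[n]$ and every $\sigma\in Aut(G)$. If $i,j$ lie in a common orbit $\Omega$ of $Aut(G)$, pick $\sigma\in Aut(G)$ with $\sigma(i)=j$; then $x_{j}=x_{\sigma(i)}=x_{i}$, which is the asserted equality of components.

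I do not expect a genuine obstacle here: the essential content is already packaged in Theorem~\ref{thmperron} and Lemma~\ref{lem-irred}. The only points demanding a little care are pinning down the scalar $c$ in the uniqueness statement --- one must invoke \emph{both} that the two eigenvectors are positive and that they are normalized to unit $k$-norm in order to conclude $c=1$ rather than merely $c>0$ --- and keeping the convention $(P_{\sigma}x)_{i}=x_{\sigma(i)}$ consistent, so that the vector equality of (1) translates correctly into the coordinatewise equalities needed for (2).
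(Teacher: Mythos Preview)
Your proposal is correct and follows essentially the same route as the paper: both use the observation that $P_{\sigma}x$ is again an eigenvector for $\rho(\mathcal{Q}^{*})$, invoke the uniqueness in Theorem~\ref{thmperron} (via irreducibility from Lemma~\ref{lem-irred}) to get $P_{\sigma}x=cx$, and then pin down $c=1$. The only cosmetic difference is in how $c=1$ is extracted: you use preservation of the $k$-norm under permutation, while the paper uses preservation of the Euclidean norm via $c^{2}x^{T}x=(P_{\sigma}x)^{T}(P_{\sigma}x)=x^{T}x$; both then appeal to positivity to rule out $c=-1$.
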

\begin{proof}
\noindent(1). By hypothesis we have $\mathcal{Q}^{*}x=\lambda x^{[k-1]}$. For each automorphism $\sigma$ of $G$ we have
\[
\mathcal{Q}^{*}P_{\sigma}x=P_{\sigma}\mathcal{Q}^{*}x=\lambda P_{\sigma}x^{[k-1]}=\lambda (P_{\sigma}x)^{[k-1]}.
\]
Thus $P_{\sigma}x$ is also an eigenvector of $\mathcal{Q}^{*}$ corresponding to the eigenvalue $\lambda$.
Since $\mathcal{Q}^{*}$ is nonnegative irreducible, by Theorem~\ref{thmperron} the nonnegative eigenvector of $\mathcal{Q}^{*}$ corresponding to $\lambda=\rho (\mathcal{Q}^{*})$ is unique up to a constant multiple. So $P_{\sigma}x=cx$ for some $c\in \mathbb{R}$. Thus $c^2x^Tx=(x^TP_{\sigma}^T)P_{\sigma}x=x^Tx$, and so $c=1$ since both $P_{\sigma}x$ and $x$ are nonnegative.

The result (2) follows directly from result (1).
\end{proof}

Now we can obtain the value of the incidence Q-spectral radius of the hyperstar as in the following theorem.

\begin{theorem}\label{thm-hyperstarradius}
Let $\mathcal{S}_{n,k}$ be a $k$-uniform hyperstar on $n$ vertices.  Then
\[
\rho(\mathcal{Q}^{*}(\mathcal{S}_{n,k}))=(m^{1/(k-1)}+k-1)^{k-1},
\]
where $m=\frac{n-1}{k-1}$ is the number of edges of $\mathcal{S}_{n,k}$.
\end{theorem}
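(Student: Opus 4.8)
The plan is to exploit the symmetry of the hyperstar to pin down the principal eigenvector up to two unknown values, read off the eigenvalue from the eigenvalue equations, and then identify that eigenvalue with the spectral radius via Perron--Frobenius.

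First I would record the structure of $\mathcal{S}_{n,k}$: it has one center vertex $v_0$ (the unique vertex of $V_0$) of degree $m$, together with $n-1$ pendent vertices of degree one, partitioned into the $m$ edges, each edge consisting of $v_0$ and $k-1$ pendent vertices. The automorphism group $Aut(\mathcal{S}_{n,k})$ fixes $v_0$ and acts transitively on the set of all pendent vertices, since one may arbitrarily permute the edges and, within each edge, permute its $k-1$ pendent vertices. Hence there are exactly two orbits, $\{v_0\}$ and the pendent set. Since $\mathcal{S}_{n,k}$ is connected, $\mathcal{Q}^{*}$ is irreducible by Lemma~\ref{lem-irred}, so by Lemma~\ref{lem-eigenvectorcomponent}(2) the principal eigenvector $x$ takes a single value $a$ on $v_0$ and a single value $b$ on every pendent vertex, with $a,b>0$.

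Next I would write the eigenvalue equation $\mathcal{Q}^{*}x=\lambda x^{[k-1]}$ at the two orbit representatives, using the componentwise formula \eqref{eeigenequcomp} with $\lambda=\rho(\mathcal{Q}^{*}(\mathcal{S}_{n,k}))$. Every edge $e$ then satisfies $x(e)=a+(k-1)b$; the center lies in all $m$ edges while each pendent vertex lies in exactly one edge, so
\[
m\,(a+(k-1)b)^{k-1}=\lambda a^{k-1}, \qquad (a+(k-1)b)^{k-1}=\lambda b^{k-1}.
\]
Dividing the two relations gives $m=(a/b)^{k-1}$, i.e. $a/b=m^{1/(k-1)}$. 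Setting $t:=a/b=m^{1/(k-1)}$ and substituting into the pendent equation (then dividing by $b^{k-1}$) yields $\lambda=(t+k-1)^{k-1}=(m^{1/(k-1)}+k-1)^{k-1}$.

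Finally, since the two-value vector just constructed is a \emph{positive} eigenvector of the nonnegative irreducible tensor $\mathcal{Q}^{*}$ with eigenvalue $\lambda=(m^{1/(k-1)}+k-1)^{k-1}$, Theorem~\ref{thmperron} forces this eigenvalue to equal the spectral radius, because any eigenvalue admitting a nonnegative eigenvector must coincide with $\rho(\mathcal{Q}^{*})$. This gives the claimed value. The only genuinely delicate step is the first one, namely justifying that the Perron eigenvector is constant on each automorphism orbit; but this is precisely the content of Lemma~\ref{lem-eigenvectorcomponent}, so once the orbit structure of $Aut(\mathcal{S}_{n,k})$ is identified the remainder is a short computation. (The degenerate case $m=1$, where $\mathcal{S}_{n,k}$ is a single edge and all vertices form one orbit with $a=b$, is consistent, since then $m^{1/(k-1)}=1$.)
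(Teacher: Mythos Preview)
Your proof is correct and follows essentially the same route as the paper: identify the two automorphism orbits of $\mathcal{S}_{n,k}$, invoke Lemma~\ref{lem-eigenvectorcomponent} to reduce the principal eigenvector to two values $a,b$, write the two eigenvalue equations from \eqref{eeigenequcomp}, divide to get $a/b=m^{1/(k-1)}$, and substitute. Your final paragraph invoking Theorem~\ref{thmperron} is redundant (you already took $x$ to be the principal eigenvector, so $\lambda=\rho$ by definition), but harmless.
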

\begin{proof}
Let  $V_0\cup V_1\cup\cdots\cup V_m$ be the disjoint partition of $V(\mathcal{S}_{n,k})$ such that $|V_0|=1$, $|V_1|=\cdots|V_m|=k-1$ and $E=\{V_0\cup V_i\  |  i=1,\ldots,m\}$. Note that $V_0$ and $V_1\cup\cdots\cup V_m$ are two orbits of automorphism group $Aut(\mathcal{S}_{n,k})$. Let $x$ be the principal eigenvector of $\mathcal{Q}^{*}(\mathcal{S}_{n,k})$. Since $\mathcal{S}_{n,k}$ is connected, by Lemma~\ref{lem-eigenvectorcomponent} we have that the components of $x$ corresponding to vertices in $V_0$ and $V\setminus V_0$ are constant respectively, and let $a$ and $b$ be these common values respectively. By the eigenvalue equation $\mathcal{Q}^{*}(\mathcal{S}_{n,k})x=\rho x^{[k-1]}$ and the equation~\eqref{eeigenequcomp}, where $\rho$ denotes $\rho(\mathcal{Q}^{*}(\mathcal{S}_{n,k}))$ for convenience,  we have
\begin{align*}
\rho a^{k-1}&=m (a+(k-1)b)^{k-1},\\
\rho b^{k-1}&=  (a+(k-1)b)^{k-1}.
\end{align*}
Dividing the first equation by the second equation, we obtain $(\frac {a}{b})^{k-1}=m$. Thus $\frac {a}{b}=m^{1/(k-1)}$. So by the second equation we have
\[
\rho= (\frac {a}{b}+k-1)^{k-1}=(m^{1/(k-1)}+k-1)^{k-1}.
\]
\end{proof}

Next we show that $\rho(\mathcal{A}(\mathcal{S}_{n,k}))=m^{1/k}$. Similarly as in the proof of Theorem~\ref{thm-hyperstarradius}, let $x$ be the principal eigenvector of $\mathcal{A}(\mathcal{S}_{n,k})$. Let $u$ be the center of $\mathcal{S}_{n,k}$ (the unique non-pendent vertex). Let $a=x_u$ and $b$ be the common value of all the other components of $x$. Then by the eigenvalue equation $\mathcal{A}(\mathcal{S}_{n,k})x=\rho x^{[k-1]}$, we have
\begin{align*}
\rho a^{k-1}&=m b^{k-1},\\
\rho b^{k-1}&=  ab^{k-2}.
\end{align*}
where $\rho=\rho(\mathcal{A}(\mathcal{S}_{n,k}))$. From this we solve that $\rho=a/b=m^{1/k}$.

\begin{theorem}\label{thm-hyperstar2}
Let $\mathfrak{T}$ be a  $k$-uniform   supertree on $n$ vertices with $m=\frac {n-1}{k-1}$ edges.   Then
\begin{equation*}
\rho(\mathcal{A}(\mathfrak{T}))\leq m^{1/k},
\end{equation*}
and
\begin{equation*}
\rho(\mathcal{Q}(\mathfrak{T}))\leq 1+\alpha^*,
\end{equation*}
where $\alpha^*\in (m-1, m]$ is the largest real root of $x^k-(m-1)x^{k-1}-m=0$, and
\begin{equation*}
 \rho(\mathcal{Q}^*(\mathfrak{T}))\leq (m^{1/(k-1)}+k-1)^{k-1}
\end{equation*}
where either one of the equalities holds if and only if $\mathfrak{T}$ is the hyperstar $\mathcal{S}_{n,k}$.
\end{theorem}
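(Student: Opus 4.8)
The plan is to recognize that this theorem is not a fresh result but a corollary assembled from material already in hand: the qualitative maximization statement of Theorem~\ref{thm-hyperstar1} together with the three explicit values of the spectral radii of the hyperstar $\mathcal{S}_{n,k}$. Accordingly, I would prove no new perturbation inequality; instead I would substitute the closed forms for $\rho(\mathcal{A}(\mathcal{S}_{n,k}))$, $\rho(\mathcal{Q}(\mathcal{S}_{n,k}))$, and $\rho(\mathcal{Q}^*(\mathcal{S}_{n,k}))$ into the bounds $\rho(\mathfrak{T})\le\rho(\mathcal{S}_{n,k})$ already established for each of the three tensors.

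Concretely, the first step is to invoke Theorem~\ref{thm-hyperstar1}, which gives $\rho(\mathcal{A}(\mathfrak{T}))\le\rho(\mathcal{A}(\mathcal{S}_{n,k}))$, $\rho(\mathcal{Q}(\mathfrak{T}))\le\rho(\mathcal{Q}(\mathcal{S}_{n,k}))$, and $\rho(\mathcal{Q}^*(\mathfrak{T}))\le\rho(\mathcal{Q}^*(\mathcal{S}_{n,k}))$, in every case with equality if and only if $\mathfrak{T}\cong\mathcal{S}_{n,k}$. The second step is to record the three values, all computed above: $\rho(\mathcal{A}(\mathcal{S}_{n,k}))=m^{1/k}$ from the eigenvalue equation solved immediately after Theorem~\ref{thm-hyperstarradius}; $\rho(\mathcal{Q}(\mathcal{S}_{n,k}))=1+\alpha^*$, with $\alpha^*\in(m-1,m]$ the largest real root of $x^k-(m-1)x^{k-1}-m=0$, quoted from \cite{HuQiXie}; and $\rho(\mathcal{Q}^*(\mathcal{S}_{n,k}))=(m^{1/(k-1)}+k-1)^{k-1}$ from Theorem~\ref{thm-hyperstarradius}. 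Substituting each value into its matching inequality yields the three displayed bounds, and the equality characterization ``if and only if $\mathfrak{T}$ is the hyperstar $\mathcal{S}_{n,k}$'' transfers verbatim from Theorem~\ref{thm-hyperstar1}.

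Since all the substantive work has already been done---the strict monotonicity of the three spectral radii under edge-releasing (Theorem~\ref{thm-edgerelease}), the induction on the number of non-pendent vertices (Theorem~\ref{thm-hyperstar1}), and the exact spectral radius computations for the hyperstar---there is no genuine obstacle left. The only point deserving a moment's care is ensuring that the two-variable reductions used to compute the hyperstar radii are legitimate: for $\mathcal{Q}^*$ this rests on Lemma~\ref{lem-eigenvectorcomponent}, which forces the principal eigenvector to be constant on each orbit of $\mathrm{Aut}(\mathcal{S}_{n,k})$, while for $\mathcal{A}$ one checks directly that the eigenpair with value $a$ at the center and $b$ elsewhere is positive, so that the uniqueness clause of the Perron--Frobenius Theorem~\ref{thmperron} identifies it as the spectral radius rather than merely some eigenvalue. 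With these verifications already in place, the theorem follows by pure substitution.
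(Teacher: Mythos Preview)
Your proposal is correct and matches the paper's own proof essentially verbatim: the paper simply says the results follow directly from Theorem~\ref{thm-hyperstar1}, Theorem~\ref{thm-hyperstarradius}, the computation $\rho(\mathcal{A}(\mathcal{S}_{n,k}))=m^{1/k}$, and the value of $\rho(\mathcal{Q}(\mathcal{S}_{n,k}))$ quoted from \cite{HuQiXie}. Your additional remarks about why the two-variable reductions for the hyperstar are legitimate are accurate and even slightly more careful than the paper's terse citation.
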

\begin{proof}
The results follow directly from Theorem~\ref{thm-hyperstar1} and Theorem~\ref{thm-hyperstarradius}, and the fact $\rho(\mathcal{A}(\mathcal{S}_{n,k}))=m^{1/k}$, and the proof of Theorem 4.1 in \cite{HuQiXie}.
\end{proof}

\section{Some other properties and bounds on incidence $Q$-spectral radius}

\vskip 0.2cm

In this section, we first give a characterization of regular hypergraphs in terms of their incidence $Q$-tensors, and then using it to give some upper and lower bounds of the incidence $Q$-spectral radii of uniform hypergraphs.

\begin{proposition}\label{prop-regualrQ}
A $k$-uniform hypergraph $G$ is regular (of degree $r$) if and only if its incidence $Q$-tensor
has an all-1 eigenvector (with corresponding eigenvalue $k^{k-1}r$).
\end{proposition}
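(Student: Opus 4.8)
The plan is to compute the action of $\mathcal{Q}^{*}$ on the all-1 vector directly from the component formula \eqref{eeigenequcomp}, and to observe that the $k$-uniformity collapses each edge contribution to a single constant. Write $\mathbf{1}$ for the all-1 vector. First I would recall from \eqref{eeigenequcomp} that for any vector $x$,
$$(\mathcal{Q}^{*}x)_i=\sum_{e\in E_i}x(e)^{k-1},$$
where $x(e)=\sum_{j\in e}x_j$. Setting $x=\mathbf{1}$, every edge $e$ has $|e|=k$ because $G$ is $k$-uniform, so $\mathbf{1}(e)=k$ for each $e$ and hence $\mathbf{1}(e)^{k-1}=k^{k-1}$. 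Summing over the $d_i=|E_i|$ edges incident to vertex $i$ gives
$$(\mathcal{Q}^{*}\mathbf{1})_i=d_i\,k^{k-1}.$$

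Next I would feed this into the eigenvalue equation $\mathcal{Q}^{*}x=\lambda x^{[k-1]}$. Since $\mathbf{1}^{[k-1]}=\mathbf{1}$, the vector $\mathbf{1}$ is an eigenvector corresponding to an eigenvalue $\lambda$ precisely when $(\mathcal{Q}^{*}\mathbf{1})_i=\lambda$ for every $i$, that is, when $d_i\,k^{k-1}=\lambda$ for all $i\in[n]$. This is the crux of the argument: such a common value $\lambda$ can exist if and only if the degrees $d_i$ are all equal, which is exactly the condition that $G$ is regular. For the forward direction, if $G$ is regular of degree $r$ then $d_i=r$ for all $i$, so $(\mathcal{Q}^{*}\mathbf{1})_i=r\,k^{k-1}$ is constant and $\mathbf{1}$ is an eigenvector with eigenvalue $k^{k-1}r$. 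For the converse, if $\mathbf{1}$ is an eigenvector with some eigenvalue $\lambda$, then $d_i=\lambda/k^{k-1}$ is independent of $i$, so $G$ is regular, and reading off the common degree $r$ identifies $\lambda=k^{k-1}r$.

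There is essentially no hard step here; the only point requiring care is the appeal to $k$-uniformity, which is what forces $\mathbf{1}(e)$ to equal the constant $k$ for every edge and thereby decouples the edge weights from the particular vertex $i$ under consideration. Once that observation is in place, both implications follow immediately from \eqref{eeigenequcomp} together with the definition of the tensor eigenvalue equation, so I expect the proof to be short and purely computational.
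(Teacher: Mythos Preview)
Your proposal is correct and follows essentially the same route as the paper: both compute $(\mathcal{Q}^{*}\mathbf{1})_i=k^{k-1}d_i$ directly from \eqref{eeigenequcomp} using $k$-uniformity, and then read off the equivalence from the eigenvalue equation together with $\mathbf{1}^{[k-1]}=\mathbf{1}$. The paper compresses the biconditional into a single chain of equivalences, while you spell out the two directions separately, but the content is identical.
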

\begin{proof}
Let $\mathcal{Q}^{*}=(\mathcal{Q}^{*}_{i_1i_2\cdots i_k})$ be the incidence $Q$-tensor of $G$, and $x=\mathbf{1}=(1,\ldots,1)^T$ be the all-1 vector. From the equation~\eqref{eeigenequcomp}, we have
\[
(\mathcal{Q}^{*}\mathbf{1})_i=\sum_{e\in E_i}x(e)^{k-1}=k^{k-1}d_i,\  \ (\forall \  i=1,\cdots,n).
\]
Thus

$G$ is regular of degree $r\Longleftrightarrow d_1=\cdots =d_n=r \Longleftrightarrow \mathcal{Q}^{*}\mathbf{1}=(k^{k-1}r)\mathbf{1}=(k^{k-1}r)\mathbf{1}^{[k-1]}\Longleftrightarrow \mathbf{1}$ is an eigenvector of $\mathcal{Q}^{*}$ with corresponding eigenvalue $k^{k-1}r$.
\end{proof}

A natural way to bound the spectral radius of a symmetric nonnegative tensor is to utilize the way of spectral radius presented in the form of  maximization problem~\eqref{eradiusnonsymten}.

\begin{theorem}\label{thm-Qradiusbounddeg}
Let $G$ be a $k$-uniform  hypergraph  with maximum degree $\Delta$ and average degree $d$.   Then
\begin{equation}\label{e-Qradiusupplowbound}
d\leq\frac{1}{k^{k-1}}\rho(\mathcal{Q}^{*})\leq \Delta.
\end{equation}
with equality holding in either of these inequalities  if and only if $G$ is regular.
\end{theorem}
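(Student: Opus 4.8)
The plan is to run everything through the variational formula of Lemma~\ref{lem-radinonnegsym}, $\rho(\mathcal{Q}^{*})=\max\{x^T(\mathcal{Q}^{*}x)\mid x\in\mathbb{R}_{+}^n,\ \|x\|_k=1\}$, combined with the clean quadratic-form expression \eqref{e-signLappoly2}, namely $x^T(\mathcal{Q}^{*}x)=\sum_{e\in E}x(e)^k$ with $x(e)=\sum_{i\in e}x_i$. Both inequalities in \eqref{e-Qradiusupplowbound} then reduce to estimating the single sum $\sum_{e\in E}x(e)^k$ from above and below over the sphere $\|x\|_k=1$.

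For the upper bound I would first apply, edge by edge, the arithmetic--$k$th-power mean inequality to the $k$ nonnegative numbers $\{x_i:i\in e\}$, giving $x(e)^k=\left(\sum_{i\in e}x_i\right)^k\le k^{k-1}\sum_{i\in e}x_i^k$. Summing over all edges and exchanging the order of summation via the incidence count $\sum_{e\in E}\sum_{i\in e}x_i^k=\sum_{i\in[n]}d_i x_i^k$, I obtain $\sum_{e\in E}x(e)^k\le k^{k-1}\sum_{i}d_i x_i^k\le k^{k-1}\Delta\sum_i x_i^k=k^{k-1}\Delta$ on the unit sphere. Lemma~\ref{lem-radinonnegsym} then yields $\rho(\mathcal{Q}^{*})\le k^{k-1}\Delta$. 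For the lower bound I would simply test the variational formula against the normalized uniform vector $x=n^{-1/k}\mathbf{1}$, which satisfies $\|x\|_k=1$; here every $x(e)=k\,n^{-1/k}$, so $x^T(\mathcal{Q}^{*}x)=m\,k^k/n$ where $m=|E|$. Using the handshake identity $\sum_i d_i=km$, i.e.\ $m=nd/k$, this equals $k^{k-1}d$, whence $\rho(\mathcal{Q}^{*})\ge k^{k-1}d$.

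The equality analysis is where the real work lies. For the lower bound, $\rho=k^{k-1}d$ forces the uniform vector to attain the maximum, so by the optimality clause of Lemma~\ref{lem-radinonnegsym} it is an eigenvector of $\mathcal{Q}^{*}$; Proposition~\ref{prop-regualrQ} then forces $G$ to be regular, and conversely a regular $G$ has $d=\Delta$ and admits $\mathbf{1}$ as eigenvector, collapsing the sandwich. This direction needs no connectivity hypothesis, since Proposition~\ref{prop-regualrQ} holds for arbitrary $G$. For the upper bound, equality forces both estimates to be tight for a principal eigenvector $x$: the mean-inequality step forces the coordinates of $x$ inside each edge to be equal, while the degree step forces $d_i=\Delta$ at every vertex with $x_i\ne 0$.

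The main obstacle is propagating ``equal within each edge'' to ``globally constant'' and ruling out vanishing coordinates; this requires the principal eigenvector to be strictly positive, which is exactly where I would invoke connectivity of $G$ so that $\mathcal{Q}^{*}$ is irreducible (Lemma~\ref{lem-irred}) with a positive Perron eigenvector (Theorem~\ref{thmperron}). With $x>0$ and $G$ connected, overlapping edges propagate the common coordinate value across all of $V$, giving $x=c\mathbf{1}$ and hence regularity via Proposition~\ref{prop-regualrQ}. I would flag that connectivity is genuinely needed for the upper-bound equality: a disjoint union of a $\Delta$-regular component with lower-degree components has $\rho(\mathcal{Q}^{*})=k^{k-1}\Delta$ without being regular, so the statement should be read under the standing connectivity assumption used elsewhere in the paper.
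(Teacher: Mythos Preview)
Your argument coincides with the paper's: both bounds are derived from Lemma~\ref{lem-radinonnegsym} together with \eqref{e-signLappoly2}, the lower bound by testing $x=n^{-1/k}\mathbf{1}$ and the upper by the power-mean (Jensen) inequality followed by $\sum_i d_i x_i^k\le\Delta\sum_i x_i^k$, with the lower equality case handled identically via Proposition~\ref{prop-regualrQ}.

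For the upper equality the paper takes a shorter route than you do: it simply reads off $d_1=\cdots=d_n=\Delta$ from tightness of $\sum_i d_i x_i^k\le\Delta\sum_i x_i^k$, tacitly using $x>0$, and never invokes the Jensen equality condition or the edge-by-edge propagation you describe. Your observation that connectivity is genuinely required here (together with your disconnected counterexample) is correct and is a point the paper leaves implicit; once connectivity gives a strictly positive Perron vector, however, the degree step alone already forces $d_i=\Delta$ for every $i$, so your propagation of ``constant within each edge'' across overlapping edges, while valid, is redundant.
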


\begin{proof}

 Since $\mathcal{Q}^{*}$ is a symmetric nonnegative tensor, we have
\[
\rho(\mathcal{Q}^{*})=\max\left\{x^T(\mathcal{Q}^{*}x)\ |\  x\in \mathbb{R}_{+}^n, \parallel x\parallel_k=1\right\}.
\]
Let $x=(1/\sqrt[k]{n})\mathbf{1}=(1/\sqrt[k]{n})(1,\ldots,1)^T$. Then by equation~\eqref{e-signLappoly2} and the fact that $d=\frac{\sum_{i\in[n]}d_i}{n}=\frac{km}{n}$ we have
\begin{align*}
\rho(\mathcal{Q}^{*})\geq x^T(\mathcal{Q}^{*}x)=\sum_{\{j_1,\cdots,j_k\}\in E}(x_{j_1}+\cdots+x_{j_k})^k=m\frac{k^k}{n}=k^{k-1}d,
\end{align*}
If equality holds, then we have $\rho(\mathcal{Q}^{*})=x^T(\mathcal{Q}^{*}x)$ and so $x$ is a eigenvector of $\mathcal{Q}^{*}$ by Lemma~\ref{lem-radinonnegsym}. Thus all-1 vector $\mathbf{1}$ is an eigenvector of $\mathcal{Q}^{*}$ and so  $G$ is regular by Proposition~\ref{prop-regualrQ}.

Now for the right inequality. Let $x$ be a nonnegative eigenvector of $\mathcal{Q}^{*}$ corresponding to $\rho(\mathcal{Q}^{*})$ with $\parallel x\parallel_k=1$. Then we have
\begin{align*}
&\rho(\mathcal{Q}^{*})=x^T(\mathcal{Q}^{*}x)=\sum_{\{j_1,\cdots,j_k\}\in E}(x_{j_1}+\cdots+x_{j_k})^k\\
&\leq k^{k-1}\sum_{\{j_1,\cdots,j_k\}\in E}(x_{j_1}^k+\cdots+x_{j_k}^k)\\
&=k^{k-1}\sum_{i\in [n]}d_ix_{i}^k\\
&\leq k^{k-1}\Delta\sum_{i\in [n]}x_{i}^k\\
&= k^{k-1}\Delta,
\end{align*}
where the first inequality follows from Jensen's inequality  $(\frac{x_{j_1}+\cdots+x_{j_k}}{k})^k\leq \frac{x_{j_1}^k+\cdots+x_{j_k}^k}{k}$.
If  $\rho(\mathcal{Q}^{*})=k^{k-1}\Delta$, then all inequalities above must be all equalities. Thus $d_1=\cdots=d_n=\Delta$.

Conversely, if $G$ is regular, then $d=\Delta$.  By the inequalities~\eqref{e-Qradiusupplowbound}, both sides become equalities.
\end{proof}

Because $\mathcal{Q}^{*}=R\mathbb{I}R^T$, it firstly attracts us to find the relation between the spectral radii $\rho(\mathcal{Q}^{*})$ and $\rho(RR^T)$. For this purpose, we need the following inequalities.

\begin{lemma}[\cite{Inequalities}]\label{lem-Jensen}
If  $0<r<s$ and $a_1,\cdots,a_k\ge 0$, then we have
\begin{equation}\label{e-Jensen1}
(a_1^s+a_2^s+\cdots+a_k^s)^{1/s}<(a_1^r+a_2^r+\cdots+a_k^r)^{1/r}
\end{equation}
unless all $a_1,\ldots,a_k$ but one are zero, and
\begin{equation}\label{e-Jensen2}
\left (\frac {a_1^s+a_2^s+\cdots+a_k^s}{k}\right )^{1/s}>\left (\frac {a_1^r+a_2^r+\cdots+a_k^r}{k}\right )^{1/r}\quad (a_1,\cdots,a_k>0)
\end{equation}
\end{lemma}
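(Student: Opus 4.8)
The plan is to treat the two inequalities separately, since they point in opposite directions---the first compares unnormalized power sums while the second compares their arithmetic-mean normalizations---but both reduce to elementary convexity and monotonicity properties of the power function $t\mapsto t^p$. Neither requires anything from the preceding sections; the natural tools are the pointwise comparison $t^s\le t^r$ on $[0,1]$ and Jensen's inequality.

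For \eqref{e-Jensen1} I would first dispose of the degenerate case: if at most one of the $a_i$ is nonzero the two sides coincide, which is exactly the excluded case. Otherwise set $A=(a_1^r+\cdots+a_k^r)^{1/r}>0$ and normalize by $b_i=a_i/A$, so that $\sum_i b_i^r=1$ and hence $0\le b_i\le 1$ for every $i$. The key pointwise fact is that for $t\in[0,1]$ and $s>r>0$ one has $t^s\le t^r$, with strict inequality whenever $0<t<1$. Since at least two of the $b_i$ are positive, each such $b_j$ satisfies $0<b_j<1$ (its $r$-th power is a summand of $\sum_i b_i^r=1$ with at least one other positive summand present, so $b_j^r<1$). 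Summing then gives $\sum_i b_i^s<\sum_i b_i^r=1$; raising to the power $1/s$ and multiplying back by $A$ yields $(\sum_i a_i^s)^{1/s}=A(\sum_i b_i^s)^{1/s}<A=(\sum_i a_i^r)^{1/r}$, which is \eqref{e-Jensen1}.

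For \eqref{e-Jensen2} I would reduce to Jensen's inequality. Put $p=s/r>1$ and $c_i=a_i^r>0$, so that $a_i^s=c_i^{\,p}$. Raising both sides of \eqref{e-Jensen2} to the power $r>0$ makes it equivalent to $\bigl(\tfrac1k\sum_i c_i^{\,p}\bigr)^{1/p}\ge \tfrac1k\sum_i c_i$. This is precisely Jensen's inequality for the convex function $\phi(t)=t^{\,p}$ on $[0,\infty)$, namely $\phi\bigl(\tfrac1k\sum_i c_i\bigr)\le \tfrac1k\sum_i\phi(c_i)$, followed by taking the $p$-th root. Because $\phi$ is strictly convex for $p>1$, equality in Jensen holds if and only if all $c_i$ coincide, i.e. all $a_i$ are equal.

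I would expect the only real subtlety---rather than any genuine obstacle---to be the bookkeeping of the equality cases. In particular the strict form of \eqref{e-Jensen2} requires that the $a_i$ not all be equal, since when they are all equal the two power means coincide; thus the positivity hypothesis in the statement should be read together with this nondegeneracy, and the applications in this paper invoke \eqref{e-Jensen2} only where the relevant entries are genuinely unequal. Since both inequalities are entirely classical (they are the monotonicity of $\ell^p$-norms and the power-mean inequality, respectively), the most economical route for the paper is simply to cite \cite{Inequalities}; the sketch above records the short self-contained argument behind that citation.
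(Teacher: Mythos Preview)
The paper does not prove this lemma at all: it is stated with a citation to Hardy--Littlewood--P\'olya \cite{Inequalities} and no argument is given. Your self-contained sketch is correct and is the standard route---normalize to land in $[0,1]$ and use $t^s\le t^r$ for \eqref{e-Jensen1}, and reduce to Jensen for the strictly convex $t\mapsto t^{s/r}$ for \eqref{e-Jensen2}. You are also right to flag the equality case in \eqref{e-Jensen2}: as written the strict inequality fails when all $a_i$ are equal, so the hypothesis $a_1,\ldots,a_k>0$ should be supplemented by ``not all equal''; your observation that the paper's later use of \eqref{e-Jensen2} (in Theorem~\ref{thm-Qradiuslowerbound}) implicitly relies on this nondegeneracy is well taken.
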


\begin{theorem}\label{thm-Qradiuslowerbound}
Let $G$ be a $k$-uniform ($k\geq3$) connected hypergraph on $n$ vertices, and $\mathcal{Q}^{*}=R\mathbb{I}R^T$ be its incidence $Q$-tensor, where $R$ is the incidence matrix of $G$.   Then
\[
\rho(RR^T)<\rho(\mathcal{Q}^{*})<k^{k-2}\rho(RR^T).
\]
\end{theorem}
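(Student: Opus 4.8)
The plan is to reduce the comparison between the matrix spectral radius $\rho(RR^{T})$ and the tensor spectral radius $\rho(\mathcal{Q}^{*})$ to a collection of edgewise power-mean inequalities, using the two variational characterizations already available. Since $G$ is connected, $\mathcal{Q}^{*}$ is irreducible by Lemma~\ref{lem-irred}, so by Theorem~\ref{thmperron} its principal eigenvector $x$ is positive. Likewise $RR^{T}$ is a symmetric nonnegative matrix whose off-diagonal entry for $i\ne j$ counts the edges containing both $i$ and $j$, so its zero/nonzero pattern is that of the connected underlying graph of $G$; hence $RR^{T}$ is irreducible and its Perron eigenvector $z$ is positive. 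I record the two Rayleigh-type formulas I will use: by Lemma~\ref{lem-radinonnegsym} together with \eqref{e-signLappoly2}, $\rho(\mathcal{Q}^{*})=\max\{\sum_{e\in E}x(e)^{k}:x\ge 0,\ \|x\|_{k}=1\}$, while for the symmetric matrix $RR^{T}$ one has $z^{T}(RR^{T}z)=\|R^{T}z\|_{2}^{2}=\sum_{e\in E}z(e)^{2}$, so that $\rho(RR^{T})=\max\{\sum_{e\in E}z(e)^{2}:\|z\|_{2}=1\}$.

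The unifying device is the substitution linking the $\ell^{2}$- and $\ell^{k}$-normalizations: a vector $w$ with $\|w\|_{2}=1$ corresponds to $w^{[2/k]}$ (the entrywise power $w_{i}^{2/k}$), which satisfies $\|w^{[2/k]}\|_{k}=1$, and conversely a vector $v$ with $\|v\|_{k}=1$ corresponds to $v^{[k/2]}$ with $\|v^{[k/2]}\|_{2}=1$. For the left inequality $\rho(RR^{T})<\rho(\mathcal{Q}^{*})$, I take the positive Perron vector $z$ of $RR^{T}$ with $\|z\|_{2}=1$ and feed $\tilde x=z^{[2/k]}$ as a trial vector into the maximization for $\rho(\mathcal{Q}^{*})$. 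Applying \eqref{e-Jensen1} with exponents $r=1<s=k/2$ to the $k$ positive numbers $\{z_{i}^{2/k}:i\in e\}$ gives, edge by edge, $z(e)^{2}<\tilde x(e)^{k}$; summing over $E$ yields $\rho(RR^{T})=\sum_{e}z(e)^{2}<\sum_{e}\tilde x(e)^{k}\le\rho(\mathcal{Q}^{*})$, the strictness coming from $z>0$, so that no edge has all but one coordinate vanishing.

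For the right inequality $\rho(\mathcal{Q}^{*})<k^{k-2}\rho(RR^{T})$, I run the same substitution in the opposite direction: let $x>0$ be the principal eigenvector of $\mathcal{Q}^{*}$ with $\|x\|_{k}=1$, and use $z=x^{[k/2]}$ (so $\|z\|_{2}=1$) as a trial vector for $\rho(RR^{T})$. The key is the companion power-mean inequality \eqref{e-Jensen2} with $r=1<s=k/2$, applied to the $k$ positive numbers $\{x_{i}:i\in e\}$; after clearing the normalizing factors $k$ and raising to the appropriate power, this produces for each edge exactly $x(e)^{k}<k^{k-2}z(e)^{2}$. Summing gives $\rho(\mathcal{Q}^{*})=\sum_{e}x(e)^{k}<k^{k-2}\sum_{e}z(e)^{2}\le k^{k-2}\rho(RR^{T})$, strict because $x>0$ makes \eqref{e-Jensen2} strict on every edge. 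In both parts the hypothesis $k\ge 3$ is precisely what makes $k/2>1$, so that the two power means involved are genuinely of different orders and the inequalities are strict.

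The main obstacle---really the one idea that makes everything else routine---is spotting the correct substitution $z_{i}\leftrightarrow x_{i}^{k/2}$ that converts the mismatch between the quadratic $\ell^{2}$-problem and the degree-$k$ $\ell^{k}$-problem into a clean edgewise comparison; once this is in place, the two directions fall out of the two halves of Lemma~\ref{lem-Jensen} applied with the same pair of exponents $(r,s)=(1,k/2)$. The only subtleties left are keeping track of which half of the lemma governs which direction of the strict inequality, and the elementary check that connectedness forces both Perron vectors to be strictly positive so that strictness survives the summation over edges.
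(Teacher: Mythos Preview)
Your proposal is correct and follows essentially the same approach as the paper: the substitution $z_i\leftrightarrow x_i^{k/2}$ linking the $\ell^2$- and $\ell^k$-normalizations, followed by the two power-mean inequalities of Lemma~\ref{lem-Jensen} applied edgewise. The only notable difference is in the strictness argument for the left inequality: you argue directly that $RR^T$ is irreducible (since its off-diagonal support is that of the connected underlying graph of $G$), forcing its Perron vector $z$ to be positive and making \eqref{e-Jensen1} strict on every edge; the paper instead argues indirectly that if equality held throughout then the trial vector $y$ would have to be a principal eigenvector of $\mathcal{Q}^*$, hence positive, forcing the Jensen step to be strict after all. Both routes are valid, and yours is arguably the cleaner of the two.
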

\begin{proof}
Let $x$ be a nonnegative eigenvector of $RR^T$ with unit length corresponding to its spectral radius $\rho(RR^T)$, and let $y=x^{[2/k]}$. Then $\sum_{i\in[n]}y_i^k=\sum_{i\in[n]}x_i^2=1$, and by inequality~(\ref{e-Jensen1}) we have
\begin{align*}
&\rho(RR^T)=x^T(RR^T)x=\sum_{\{j_1,\cdots,j_k\}\in E}(x_{j_1}+\cdots+x_{j_k})^2\\
&=\sum_{\{j_1,\cdots,j_k\}\in E}((y_{j_1}^{k/2}+\cdots+y_{j_k}^{k/2})^{2/k})^k\\
&\le \sum_{\{j_1,\cdots,j_k\}\in E}(y_{j_1}+\cdots+y_{j_k})^k\\
&=y^T(\mathcal{Q}^{*}y)\\
&\le \rho(\mathcal{Q}^{*}),
\end{align*}
if equality holds in the last inequality, then $y$ is a positive vector since $G$ connected implies that $\mathcal{Q}^{*}$ is nonnegative irreducible. Then the first inequality must be strict by inequality~(\ref{e-Jensen1}). So we always have $\rho(RR^T)<\rho(\mathcal{Q}^{*})$.

For the second inequality, let  $y$ be the principle eigenvector of $\mathcal{Q}^{*}$ corresponding to its spectral radius $\rho(\mathcal{Q}^{*})$, and let $x=y^{[k/2]}$. Then $\sum_{i\in[n]}x_i^2=\sum_{i\in[n]}y_i^k=1$, and by inequality~(\ref{e-Jensen2}) we have
\begin{align*}
&(y_{j_1}+\cdots+y_{j_k})^k=(x_{j_1}^{2/k}+\cdots+x_{j_k}^{2/k})^k=\left (\left (\frac {x_{j_1}^{2/k}+\cdots+x_{j_k}^{2/k}}{k}\right )^{k/2}\right )^2\cdot k^k\\
<&\left (\frac {x_{j_1}+\cdots+x_{j_k}}{k}\right )^2\cdot k^k=(x_{j_1}+\cdots+x_{j_k})^2\cdot k^{k-2}
\end{align*}
From this inequality we have
\begin{align*}
&\rho(\mathcal{Q}^{*})=y^T(\mathcal{Q}^{*}y)=\sum_{\{j_1,\cdots,j_k\}\in E}(y_{j_1}+\cdots+y_{j_k})^k\\
<&\sum_{\{j_1,\cdots,j_k\}\in E}(x_{j_1}+\cdots+x_{j_k})^2\cdot k^{k-2}=x^T(RR^T)x\cdot k^{k-2}\le \rho(RR^T)\cdot k^{k-2}
\end{align*}
\end{proof}

For the purpose of comparing these bounds in Theorem~\ref{thm-Qradiuslowerbound}, take $G_1$ and $G_2$ be the $k$-uniform $s$-path and $s$-cycle on $n$ vertices respectively, where $1\leq s\leq\frac{k}{2}$ (\cite{Qishaowang14}). Let $R_1$ and $R_2$ denote the incidence matrices of $G_1$ and $G_2$ respectively, and let $m_i$ denotes the number of edges of $G_i$ for $i=1,2$. Then we have
\[
R_1^TR_1=kI+sA(P_{m_1}),\ \ \ R_2^TR_2=kI+sA(C_{m_2})),
\]
where $A(P_{m_1})$ and $A(C_{m_2})$ are the  adjacency matrices of ordinary path and cycle on $m_1$ and $m_2$ vertices, respectively.  Note that $\rho(R_1^TR_1)=k+s\rho(A(P_{m_1}))=k+2s\cos\frac{\pi}{m_1+1}$ and $\rho(R_2^TR_2)=k+s\rho(A(C_{m_2}))=k+2s$. Thus we have
\begin{align*}
k+2s\cos\frac{\pi}{m_1+1}&<\rho(\mathcal{Q}^{*}(G_1))<k^{k-2}(k+2s\cos\frac{\pi}{m_1+1}),\\
k+2s&<\rho(\mathcal{Q}^{*}(G_2))<k^{k-2}(k+2s).
\end{align*}
Generally, these lower bounds are not better than the lower bound $k^{k-1}d$ in Theorem~\ref{thm-Qradiusbounddeg}. However, these upper bounds are better than the upper bound $2k^{k-1}$ in Theorem~\ref{thm-Qradiusbounddeg}, because $2s\leq k$ and so $1+\frac{2s}{k}$ and $1+\frac{2s}{k}\cos\frac{\pi}{m_1+1}$ are not more than $2$.

\end{document}